\newcommand{\prob}{\mathbb{P}}
\newcommand{\expec}{\mathbb{E}}
\newcommand{\Exp}[1]{\expec\left[#1\right]}
\newcommand{\Var}[1]{\textup{Var}\left(#1\right)}
\newcommand{\plim}{\xrightarrow[n\to \infty]{\mathbb{P}}}
\newcommand{\dlim}{\xrightarrow[n\to \infty]{d}}
\newcommand{\bigO}[1]{O\left(#1\right)}
\newcommand{\bigOp}[1]{O_{\prob}\left(#1\right)}
\newcommand{\op}{o_{\prob}}
\newtheorem{theorem}{Theorem}[section]
\newtheorem{remark}{Remark}[section]
\newtheorem{lemma}[theorem]{Lemma}
\newtheorem{corollary}[theorem]{Corollary}
\begin{document}
\title[On the distances within cliques in a soft random geometric graph]{On the distances within cliques in a soft random geometric graph}
\author[1]{\fnm{Ercan} \sur{S\"onmez}}\email{ercan.soenmez@mathematik.uni-stuttgart.de}
\author[2]{\fnm{Clara} \sur{Stegehuis}}\email{c.stegehuis@utwente.nl}
\affil[1]{\orgname{University of Stuttgart}}
\affil[2]{\orgname{University of Twente}}
\date{\today}

\abstract{
    We study the distances of edges within cliques in a soft random geometric graph on a torus, where the vertices are points of a homogeneous Poisson point process, and far-away points are less likely to be connected than nearby points. We obtain the scaling of the maximal distance between any two points within a clique of size $k$. Moreover, we show that asymptotically in all cliques with large distances, there is only one remote point and all other points are nearby. Furthermore, we prove that a re-scaled version of the maximal $k$-clique distance converges in distribution to a Fr\'echet distribution. Thereby, we describe the order of magnitude according to which the largest distance between two points in a clique decreases with the clique size. 
    }
    \keywords{ Random graphs, cliques, extreme value theory, network clustering, soft random geometric graph.}
    \pacs[MSC Classification]{ Primary: 05C80; Secondary: 05C69, 60G70, 05C82, 82B21}

\maketitle

\section{Introduction}
While networks appear in many different applications, many real-world networks can be thought of as being embedded in some geometric space. In social networks, for example, the geometric space may contain information on people's locations and interests, while in networks of chemical reactions, the underlying geometric space may contain information on the properties of the involved molecules. Usually, nearby vertices in these geometric spaces are then more likely to be connected than far away vertices. For this reason, many random graph models that generate geometric networks have been developed. Usually, vertices are then embedded in a continuous, $d$-dimensional space, and nearby vertices are more likely to connect than further apart ones~\cite{bringmann2015,coppersmith2002diameter,penrose2003}.

In general, cliques are an important indicator of structural network properties of a network. Large cliques, or clique-like structures, indicate the tendency of a network to cluster into groups. Small cliques on the other hand are related to the frequently analyzed clustering coefficient of a network.
Therefore, cliques have been an object of intensive studies in many random graph models~\cite{bianconi2006number,Blasius2017,devroye2011,janson2010,janssen2019,michielan2021,penrose2003}. Often, these works focus on the total number of cliques, or the largest clique that is present in the network. Comparatively, little attention has been devoted to the properties of these cliques instead. Here, we focus on the structures of the cliques and investigate \emph{which types of edges are part of cliques}. We investigate this problem for the soft random geometric graph~\cite{penrose2016}, a random graph model that ensures that nearby vertices are more likely to connect to each other than vertices that are further apart, but also allows for longer connections. In this soft version, the random geometric graph can be seen as the grand canonical ensemble that maximizes ensemble entropy under the constraints that the average number
of particles (edges) and the average energy that depends on the distance between nodes are fixed to given values~\cite{boguna2020}. As such, properties of the soft random geometric graph and other versions of it that add more constraints on the model~\cite{boguna2020,bringmann2015} have been an object of intense study~\cite{bringmann2015,boguna2020,rs1,rs2,ostilli2015}. One particular question of interest is in understanding how networks respond to perturbations or failures, see \cite{callaway, cohen}. The longest edge might be a critical factor in determining the robustness of the network structure. Thus, the study of edge-lengths naturally emerges as crucially interesting, offering insights into the probabilistic aspects of network structure and potentially playing a pivotal role in understanding the resilience and response of complex systems.

Intuitively, in geometric models, cliques are formed between nearby vertices, as close by vertices are more likely to connect. However, in the soft random geometric graph, some long edges are present as well. The extreme value properties of these edge-lengths have been investigated in \cite{rs1,rs2}, revealing that the longest edge scales polynomially in the length in a given observation window of the model. The presence of these long edges indicates that some cliques may still contain longer edges as well. Our work therefore focuses on the question: what is the longest distance between two points in any $k$-clique? And how many cliques contain long edges?

 We investigate the number of cliques with at least one edge of length larger than some given threshold $r_n$ in the soft random geometric graph on a $d$-dimensional torus of radius $n$, and we study their properties as $n \to \infty$. We prove a scaling limit for the number of $k$-cliques with at least one edge of length exceeding $r_n$, and prove a localization phenomenon, meaning that if there is at least one long edge in a given clique, then asymptotically there are $k-2$ long edges with one equal endpoint, while all other edges of the clique are short. We also show that the longest edge over all cliques of size $k$ decreases in length with growing $k$. In particular, its length scales as $n^{d/(d-(k-1)\alpha)}$, where $\alpha$ is a parameter of the random graph model that controls the likelihood of long edges. Furthermore, we show that its re-scaled version converges to a Fr\'echet distribution. Interestingly, for all fixed $k$, the total number of $k$-cliques scales as $n^d$ in this model. Thus, while the total number of cliques scales similarly in the network size for all fixed $k$, their extreme behavior is remarkably different. 

\paragraph{Organization of the paper.}
In Section \ref{sec:model} we give a mathematical formulation of the model we investigate. In Section \ref{sec:results} we state the main results of this article. These include the statement of a localization phenomenon, i.e. asymptotically, there is only one remote vertex in cliques with sufficiently large distances, while all other vertices are close to each other, and a scaling limit for the total number of cliques with at least one edge of length $r_n$. Moreover, we provide a characterization of the largest distance within cliques of arbitrary size. In particular, we find an extreme value behavior by showing that a re-scaled version of the maximal clique distance converges in distribution to a Fr\'echet distribution. Thereby, we describe the order of magnitude according to which the largest distance decreases with the clique size. The results are accompanied by simulations in order to support our findings. The remaining Sections \ref{sec:proofs}, \ref{pr3} and \ref{pr2} are devoted to the proofs of the individual results. In order to facilitate readability, we provide an index of notation for some objects defined in this paper at the end of the paper.

\subsection{Model}\label{sec:model}
We now recall the definition of a soft random geometric graph, which dates back to \cite{penrose2016}. It is a finite version of the random connection model, a standard model of continuum percolation (see \cite{MR,penrose1991}), and can be defined on more general bounded domains (see \cite{giles}). 

Throughout this paper, let $d, n\in\mathbb{N}$, and let $\mathcal{P}$ be a homogeneous Poisson point process on $\mathbb{R}^d$ with unit intensity. We will consider $\mathcal{P}$ on the $d$-dimensional torus of radius $n$ denoted by $\mathcal{T}_n^d$ and we view $\mathcal{P}$ as a random countable subset of $\mathcal{T}_n^d$. Given the vertex set $\mathcal{P}$ the edge-set is constructed as follows. 
Each pair of points $\mathbf{x},\mathbf{y} \in \mathcal{P}$ is connected by an edge $\{\mathbf{x},\mathbf{y}\}$ with probability $g(\mathbf{x},\mathbf{y})$ independently of all other pairs of points, where we assume that $g$ is given by
\begin{equation}\label{cf}
	 g(\mathbf{x},\mathbf{y})=1-\exp(-|\mathbf{x}-\mathbf{y}|_T^{-\alpha}), \quad \mathbf{x},\mathbf{y} \in \mathbb{R}^d,
\end{equation}
for $\alpha\in (d,\infty)$, where 
\begin{equation}
    |\mathbf{x}-\mathbf{y}|_T=\sqrt{\sum_{i=1}^d\min(|x_i-y_i|, n-|x_i-y_i|)^2}.
\end{equation}
In particular $g$ has unbounded support and polynomial decay as $|\mathbf{x}-\mathbf{y}|_T \to \infty$. For $x_i\in\mathbb{R}$, we will also denote $|x_i|_T=\min(|x_i|, n-|x_i|)$.

\subsection{Notation} We end this section by introducing some notation. For $d\in\mathbb{N}$ we write $[d]=\{1, \ldots, d\}$. For two functions $f,g$ we write $f(n) = o(g(n))$ if $\lim_{n \to \infty} f(n)/g(n)=0$, and $f(n) = O(g(n))$ if $\limsup_{n \to \infty} f(n)/g(n) < \infty$. For a sequence of random variables $X_n$ and a deterministic sequence $a_n$, $n\in \mathbb{N}$, we write $X_n = O_\mathbb{P}(a(n))$ if for every $\varepsilon>0$ there exists $M>0$, $N\in \mathbb{N}$ with $\mathbb{P} ( |X_n|/a(n) \geq M) < \varepsilon$, for all $n>N$. Furthermore, we write $X_n = o_\mathbb{P}(a(n))$ if for every $\varepsilon>0$ it holds that $\mathbb{P} ( |X_n|/a(n) \geq \varepsilon) \to 0$, as $n\to \infty$. Moreover, we write $\plim$ for convergence in probability, and $\dlim$ for convergence in distribution.

\section{Main results}\label{sec:results}

Throughout this paper, we let $k \in \mathbb{N}$ with $k\geq 3$. We will consider cliques of size $k$ ($k$-cliques) in the soft random geometric graph, and focus on their geometric structure. In particular, we focus on the number of cliques with at least one edge of length at least $r_n$. We also state a localization phenomenon. That is, with high probability, cliques with long edges contain only one remote vertex, and all other vertices of the clique are nearby.

To be more precise, let $W_n^k(r_n)$ denote the number of $k$-cliques with at least one edge of length at least $r_n$, and $r_n$ is a given sequence with $r_n \to \infty$, as $n \to \infty$. Let $W_n^k(r_n,\varepsilon)$ denote the number of $k$-cliques with exactly $k-1$ edges of length at least $r_n$, and all other edges of length at most $1/\varepsilon$, for some $\varepsilon>0$. Then, the following theorem shows that these types of cliques with exactly $k-1$ long edges are asymptotically all cliques with long edges:
\begin{theorem}[Number of cliques with long edges] \label{thm:localization_long_cliques}
    When $1\ll r_n\ll n^{d/((k-1)\alpha-d)}$, then for all $\varepsilon_n$ such that $\lim_{n\to\infty}\varepsilon_n=0$ and $\varepsilon_n\geq \log(n)^{-1}$,
    \begin{equation*}
        \frac{W_n^k(r_n,\varepsilon_n)}{W_n^k(r_n)} \plim 1.
    \end{equation*}
    Furthermore,
    \begin{equation*}
        \frac{W_n^k(r_n)}{n^{d}r_n^{d-(k-1)\alpha}} \plim \frac{dC_d\pi^{d/2} M_k}{2\Gamma(1+\frac{d}{2})((k-1)\alpha-d)},
    \end{equation*}
    where 
\begin{equation} \label{mk}
    M_k = 2\int_{[-\infty,\infty]^d}\dots\int_{[-\infty,\infty]^d}\prod_{i=1}^{k-2}g(\mathbf{x_i},\mathbf{0})\prod_{1\leq u<v\leq k-2}g(\mathbf{x_u},\mathbf{x_v})d\mathbf{x_1}\dots d\mathbf{x_{k-2}}<\infty.
\end{equation}
and $C_d $ is the constant such that the volume of the $d$-dimensional torus is $C_dn^d.$
In particular, for triangles (when $k=3$),
\begin{equation*}
    M_3 = \frac{2\pi^{d/2}\Gamma(1-\frac{d}{\alpha})}{\Gamma(1+d/2)}.
\end{equation*}
Moreover, 
\begin{equation*}
    \frac{W^k_n(r_n) - \mathbb{E}[W^k_n(r_n)] }{\sqrt{\mathbb{E}[W^k_n(r_n)]}}  \xrightarrow[n\to \infty]{d} Z,
\end{equation*}
where $Z$ is a random variable with standard normal distribution. \newline
\end{theorem}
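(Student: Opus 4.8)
Throughout this sketch write $W:=W_n^k(r_n)$. The plan is to prove the central limit theorem by first conditioning on the Poisson process $\mathcal{P}$. Given $\mathcal{P}$ we have $W=\sum_{C}Y_C$, the sum running over the $k$-subsets $C$ of $\mathcal{P}$, where $Y_C=\prod_{e\in E(C)}B_e\cdot\mathbf{1}[C\text{ contains an edge of length }\ge r_n]$ and $(B_e)$ are the independent edge-indicator variables. Two summands $Y_C,Y_{C'}$ are independent given $\mathcal{P}$ as soon as $C$ and $C'$ have at most one common vertex, so conditionally on $\mathcal{P}$ this is a sum of $\{0,1\}$-valued random variables whose dependency graph, $C\sim C'$ iff $E(C)\cap E(C')\neq\emptyset$, is sparse. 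A Chen--Shao-type Berry--Esseen inequality for sums with local dependence then yields
\begin{equation*}
  \sup_{t\in\mathbb{R}}\left|\,\CProb{\frac{W-\CExp{W}{\mathcal{P}}}{\sqrt{\Var{W\mid\mathcal{P}}}}\le t}{\mathcal{P}}-\Phi(t)\right|\ \le\ \varepsilon_n(\mathcal{P}),
\end{equation*}
where $\varepsilon_n(\mathcal{P})$ is an explicit functional of $\Var{W\mid\mathcal{P}}$ and of the conditional moments $\CExp{Y_CY_{C'}}{\mathcal{P}}$, $\CExp{Y_CY_{C'}Y_{C''}}{\mathcal{P}}$ ranging over dependent cliques. (One may alternatively run this step through Malliavin--Stein bounds for Poisson functionals carrying independent pair-marks.)

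Before that I would establish the variance asymptotics $\Var{W}=\Exp{W}\,(1+o(1))$, which simultaneously justifies the normalisation by $\sqrt{\Exp{W}}$ and shows $\Exp{W}\to\infty$ throughout the window $1\ll r_n\ll n^{d/((k-1)\alpha-d)}$. Using $\Var{W}=\Exp{\Var{W\mid\mathcal{P}}}+\Var{\CExp{W}{\mathcal{P}}}$: the first term is dominated by its diagonal $\Exp{\sum_C(p_C-p_C^2)}=\Exp{W}-\Exp{\sum_C p_C^2}$ with $p_C=\CExp{Y_C}{\mathcal{P}}$, and $\Exp{\sum_C p_C^2}=O(n^d r_n^{d-2(k-1)\alpha})=o(\Exp{W})$, while its off-diagonal (edge-sharing) part is $o(\Exp{W})$ because — by the localisation already established in the earlier parts of the theorem — forcing a second long-edge $k$-clique to share an edge with a given one costs an extra factor $r_n^{-(k-2)\alpha}$ or $r_n^{d-(k-1)\alpha}$ according to whether the shared edge is long or short, both $o(1)$. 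The second term, via the Poisson chaos expansion of the $U$-statistic $\sum_C p_C$, is a sum over $j=1,\dots,k$ of squared kernel norms, each of order $O(n^d r_n^{2(d-(k-1)\alpha)})=o(\Exp{W})$ since $r_n^{d-(k-1)\alpha}\to0$; thus the positional fluctuations of $\mathcal{P}$ are of lower order and the variance is carried by the edge-randomness. Upgrading these bounds to high-probability statements via standard concentration for Poisson functionals yields $\Var{W\mid\mathcal{P}}/\Exp{W}\plim 1$ and $(\CExp{W}{\mathcal{P}}-\Exp{W})/\sqrt{\Exp{W}}\plim 0$.

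Next I would prove $\varepsilon_n(\mathcal{P})\plim0$. On the event $\{\Var{W\mid\mathcal{P}}\ge\tfrac12\Exp{W}\}$, whose probability tends to $1$, one bounds $\varepsilon_n(\mathcal{P})\le C\,\Exp{W}^{-3/2}\,N_n(\mathcal{P})$, where $N_n(\mathcal{P})$ is a sum of conditional third moments over dependency neighbourhoods; up to a constant its dominant term is the one in which the three cliques all coincide, contributing $O(\Exp{W})$, while every genuine edge-sharing costs a vanishing factor as above, so $\Exp{N_n(\mathcal{P})}=O(\Exp{W})$. Markov's inequality then gives $\varepsilon_n(\mathcal{P})=O_{\mathbb{P}}(\Exp{W}^{-1/2})=\op(1)$. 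Integrating the displayed conditional bound against the law of $\mathcal{P}$ (bounded convergence, using $\varepsilon_n(\mathcal{P})\wedge1\to0$ in probability) upgrades it to $(W-\CExp{W}{\mathcal{P}})/\sqrt{\Var{W\mid\mathcal{P}}}\dlim Z$, and Slutsky's lemma combined with the two limits of the previous paragraph yields $(W-\Exp{W})/\sqrt{\Exp{W}}\dlim Z$.

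The main obstacle is the combinatorial bookkeeping behind the two families of estimates — the variance asymptotics and $\Exp{N_n(\mathcal{P})}=O(\Exp{W})$ — which both come down to bounding the expected number of configurations of two or three long-edge $k$-cliques overlapping in one or several vertices or edges. The delicate point is that long edges render these interactions non-local: a clique counted by $W$ ties a remote vertex to a bounded core, so the dependency neighbourhood of a clique has unbounded diameter, and the estimates genuinely use the polynomial decay $g(\mathbf{x},\mathbf{y})\asymp|\mathbf{x}-\mathbf{y}|_T^{-\alpha}$ with $\alpha>d$ to make the integrals over the location of the remote vertex convergent. One must also rule out atypically dense regions of $\mathcal{P}$, which is once more handled by concentration inequalities for Poisson functionals.
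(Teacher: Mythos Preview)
Your sketch addresses only the central limit theorem part of the statement, taking the localisation and the scaling of $\Exp{W}$ as input; that is fine, since those are established earlier in the paper by first- and second-moment computations (Lemmas on $\Exp{A_k(r_n)}$, $\Exp{W_n^k(r_n)}$ and $\Var{W_n^k(r_n,\varepsilon_n)}$). For the CLT itself, however, the paper takes a genuinely different and considerably shorter route than yours.

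The paper does \emph{not} condition on $\mathcal{P}$ and run a local-dependence Berry--Esseen argument on the edge randomness. Instead it views $W$ as a functional of a \emph{marked} Poisson process (the marks encoding the independent edge variables) and applies Penrose's coupling-based Poisson approximation theorem directly. This yields
\[
d_{\mathrm{TV}}\bigl(W,\ \mathrm{Poisson}(\beta)\bigr)\ \le\ c\,\min(1,\beta^{-1})\, n^d r_n^{2d-2(k-1)\alpha},\qquad \beta=\Exp{W},
\]
and since $\beta\asymp n^d r_n^{d-(k-1)\alpha}\to\infty$ in the window $1\ll r_n\ll n^{d/((k-1)\alpha-d)}$, the right-hand side is $O(r_n^{d-(k-1)\alpha})\to 0$. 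The CLT then drops out of the Poisson CLT for $\mathrm{Poisson}(\beta)$ with $\beta\to\infty$. The essential point, stressed in the paper, is the factor $\min(1,\beta^{-1})$: the more common $U$-statistic Poisson bounds (e.g.\ Decreusefond et al.) lack it and would not vanish here.

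Your approach is a legitimate alternative and, if carried through, would give the same conclusion. Its cost is exactly what you flag as the main obstacle: one has to control, uniformly enough in $\mathcal{P}$, the second- and third-moment sums over edge-sharing $k$-cliques, and to show that both $\Var{W\mid\mathcal{P}}/\Exp{W}\plim 1$ and $(\CExp{W}{\mathcal{P}}-\Exp{W})/\sqrt{\Exp{W}}\plim 0$. These are not automatic: the dependency neighbourhood of a clique is unbounded and its size is a random functional of $\mathcal{P}$, so the Chen--Shao constants are themselves random and need concentration. All of this is doable (and your order-of-magnitude heuristics are correct), but it is substantially more bookkeeping than the paper's two-line reduction to a Poisson CLT. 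What your route would buy is a more ``direct'' normal approximation with an explicit Berry--Esseen rate in terms of overlap counts, without passing through a Poisson intermediary; the paper's route buys brevity and reuses the same coupling estimate that also drives the Poisson limit in the boundary regime.
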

This theorem shows that asymptotically all cliques with long edges are formed between $k-1$ close by vertices within a radius of $1/\varepsilon_n$, and $k-1$ edges of length at least $r_n$. Furthermore, the scaling of the largest distance in a $k$-clique decreases in $k$, as $\alpha>d$. 

We now focus on the boundary case of Theorem~\ref{thm:localization_long_cliques}, where $r_n\sim n^{d/((k-1)\alpha-d)}$, and show that this corresponds to the explicit asymptotics of the longest distance found in any $k$-clique. The following theorem shows that, for this choice of the sequence $r_n$, we now get a convergence in distribution to a Poisson distribution:

\begin{theorem}[Large distances in $k$-cliques] \label{lem: wnc}
Let $$r_n = \Bigg(\frac{2\Gamma(1+\frac{d}{2})((k-1)\alpha-d)}{dC_d\pi^{d/2} M_k}\Bigg)^{\frac{1}{d-(k-1)\alpha}} r n^{\frac{d}{(k-1)\alpha-d}}, \quad r \in (0, \infty),$$
where the constants $C_d,M_k$ are as in Theorem \ref{thm:localization_long_cliques}. Then
    \begin{equation*}
    W^k_n(r_n)  \xrightarrow[n\to \infty]{d} W^k,
\end{equation*}
where $W^k$ denotes a random variable with Poisson distribution and mean $r^{d-(k-1)\alpha}$.
\end{theorem}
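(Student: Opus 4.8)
\emph{Proof plan.} The plan is to compute the factorial moments of $W^k_n(r_n)$ and apply the method of moments, reusing the first--moment asymptotics and the localisation structure established for Theorem~\ref{thm:localization_long_cliques}.

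\emph{Step 1 (first moment).} First I would check that, for the prescribed sequence $r_n$,
\[
  \Exp{W^k_n(r_n)}\ \longrightarrow\ r^{d-(k-1)\alpha}\ =:\ \lambda .
\]
By the Mecke equation, $\Exp{W^k_n(r_n)}$ equals $\tfrac{1}{k!}$ times the integral over $(\mathcal T_n^d)^k$ of the probability that $k$ given points span a clique containing an edge of length at least $r_n$; the asymptotic evaluation of this integral, using $g(\mathbf x,\mathbf y)\sim\abs{\mathbf x-\mathbf y}_T^{-\alpha}$ as the distance grows, is exactly the computation behind the second display of Theorem~\ref{thm:localization_long_cliques} and gives $\Exp{W^k_n(r_n)}\sim \frac{dC_d\pi^{d/2}M_k}{2\Gamma(1+\frac d2)((k-1)\alpha-d)}\,n^d r_n^{d-(k-1)\alpha}$; plugging in the prescribed $r_n$ makes the right--hand side converge to $\lambda$. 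One only has to confirm that this asymptotic, proved in Theorem~\ref{thm:localization_long_cliques} for $r_n\ll n^{d/((k-1)\alpha-d)}$, persists in the boundary case $r_n\asymp n^{d/((k-1)\alpha-d)}$ (it does, and note $d/((k-1)\alpha-d)<1$ since $\alpha>d$ and $k\ge3$, so $r_n\ll n$ and torus wrap--around plays no role).

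\emph{Step 2 (factorial moments).} Write $(x)_m=x(x-1)\cdots(x-m+1)$. Since $\mathrm{Poisson}(\lambda)$ is determined by its moments and $\Exp{(\mathrm{Poisson}(\lambda))_m}=\lambda^m$, it suffices to prove $\Exp{(W^k_n(r_n))_m}\to\lambda^m$ for every $m\ge1$. Now $(W^k_n(r_n))_m$ counts ordered $m$--tuples of \emph{distinct} $k$--cliques, each carrying an edge of length at least $r_n$; realising the edges through i.i.d.\ $\mathrm{Uniform}[0,1]$ marks, disjoint cliques have independent edge--sets, and the multivariate Mecke equation lets me split $\Exp{(W^k_n(r_n))_m}$ according to the overlap pattern of the $m$ cliques. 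The pairwise vertex--disjoint tuples contribute, by this independence, exactly $\Exp{W^k_n(r_n)}^m\to\lambda^m$. For the tuples with at least one shared vertex I would use the localisation of Theorem~\ref{thm:localization_long_cliques}: up to an error that vanishes by its estimates, every clique in the tuple is a ``core'' of $k-1$ mutually nearby, mutually connected vertices plus one remote vertex joined to all of them (within a window of radius $\bigO{1/\varepsilon_n}$ for a sequence $\varepsilon_n\to0$, $\varepsilon_n\ge\log(n)^{-1}$ as in that theorem), so a shared vertex forces two cores into a common window or makes a remote vertex of one clique a vertex of another. In every such pattern the number of independent free positions drops below $m$, and the corresponding expected count is at most $\mathrm{polylog}(n)\cdot(n^d r_n^{d-(k-1)\alpha})^{m-1}\,r_n^{d-(k-1)\alpha}=\bigO{\mathrm{polylog}(n)\,r_n^{d-(k-1)\alpha}}\to0$, because $\alpha>d$ forces $(k-1)\alpha-d>0$ and $r_n\to\infty$. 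Summing the finitely many overlap types yields $\Exp{(W^k_n(r_n))_m}\to\lambda^m$, whence $W^k_n(r_n)\dlim W^k$ with $W^k\sim\mathrm{Poisson}(r^{d-(k-1)\alpha})$ by the method of moments.

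\emph{Main obstacle.} The delicate part is doing all of this \emph{in the boundary regime}, where $\Exp{W^k_n(r_n)}\asymp1$ and there is no room to lose: one must recheck that the first--moment constant of Theorem~\ref{thm:localization_long_cliques} is exact there, and --- the real work --- that every overlap pattern in the $m$--fold Mecke integral costs at least one factor $r_n^{d-(k-1)\alpha}\asymp n^{-d}$ relative to $\Exp{W^k_n(r_n)}^m$, so that the poly--logarithmic factors from the clique's internal window of radius $\asymp1/\varepsilon_n$ do not spoil the bound. An equivalent route is the Chen--Stein method: for each $k$--subset $\sigma$ of $\mathcal P$ let $X_\sigma$ be the indicator that $\sigma$ spans a clique with an edge of length at least $r_n$; conditionally on $\mathcal P$ (or after realising the edges through marks), ``$\sigma\cap\tau\neq\emptyset$'' is a dependency relation for $(X_\sigma)_\sigma$, so the total--variation distance between $W^k_n(r_n)$ and $\mathrm{Poisson}(\Exp{W^k_n(r_n)})$ is controlled by the clumping sums $\sum_\sigma\sum_{\tau\cap\sigma\neq\emptyset}\Exp{X_\sigma}\Exp{X_\tau}$ and $\sum_\sigma\sum_{\tau\neq\sigma,\,\tau\cap\sigma\neq\emptyset}\Exp{X_\sigma X_\tau}$ --- precisely the overlap integrals bounded above --- and this again gives convergence to $\mathrm{Poisson}(r^{d-(k-1)\alpha})$.
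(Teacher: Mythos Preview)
Your factorial--moment (equivalently, Chen--Stein) route is sound and would prove the theorem; the overlap integrals you isolate are exactly the right objects, and the localisation picture from Theorem~\ref{thm:localization_long_cliques} together with the first--moment computation of Lemma~\ref{lem:cliqueexp} (which only needs $1\ll r_n\ll n$, hence remains valid at the boundary $r_n\asymp n^{d/((k-1)\alpha-d)}$) makes each non--disjoint pattern cost an extra factor $r_n^{d-(k-1)\alpha}\asymp n^{-d}$, so $\Exp{(W_n^k(r_n))_m}\to\lambda^m$ as you claim. The paper, however, does not go through moments at all: it realises the edges via a marked Poisson process and applies a coupling--based Poisson approximation theorem of Penrose (stated as Lemma~\ref{Th:Penrose18}). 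For each $k$--tuple $\mathbf x$ they construct an explicit coupling $(U_\mathbf{x},V_\mathbf{x})$ of $W_n^k(r_n)$ with its Palm version, bound $w(\mathbf x)=\Exp{|U_\mathbf{x}-V_\mathbf{x}|}$ and the one--point intensity $p(\mathbf x)$ each by $O(r_n^{d-(k-1)\alpha})$, and obtain the quantitative estimate
\[
  d_{\mathrm{TV}}\bigl(W_n^k(r_n),\mathrm{Poisson}(\beta)\bigr)\ \le\ c\,\min(1,\beta^{-1})\,n^d r_n^{2d-2(k-1)\alpha},\qquad \beta=\Exp{W_n^k(r_n)},
\]
recorded as Lemma~\ref{TVP}; for the prescribed $r_n$ the right--hand side is $O(n^{-d})$ and $\beta\to r^{d-(k-1)\alpha}$, which finishes the proof. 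The underlying geometric estimates are the same as yours, but what the paper's route buys is the single total--variation inequality with the factor $\min(1,\beta^{-1})$: this same bound is reused, in the regime $\beta\to\infty$, to deduce the central limit theorem in Theorem~\ref{thm:localization_long_cliques}. Your method--of--moments argument is more elementary and avoids the marked--process and coupling machinery, but it would not by itself deliver that CLT.
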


\begin{remark}\label{ustat}
    The quantity $W^k_n(r_n)$ can be considered to be a U-statistic of order $k$, for which there are powerful results regarding Poisson approximation, see for example \cite[Theorem 7.1]{decreusefond2016}. However, the method that we will use in this paper relies on the approach presented in \cite{Penrose}, as it will give us stronger bounds that will also enable us to prove the central limit theorem in Theorem \ref{thm:localization_long_cliques}, see also Remark \ref{ustat2} below for a more detailed discussion.
\end{remark}

A consequence of Theorem \ref{lem: wnc} is the following corollary, which gives the explicit asymptotics of the longest distance found in $k$-cliques and reveals its extreme value behavior.

\begin{corollary}[Maximal distance within a clique] \label{frechet}
Let $e_n^{k,*}$ be the largest distance within a clique of size $k$, i.e. the largest edge-length in such a clique. Then,
\begin{equation*}
    \Bigg( \frac{2\Gamma(1+\frac{d}{2})((k-1)\alpha-d)}{dC_d\pi^{d/2} M_k} \Bigg)^{\frac{1}{(k-1)\alpha-d}} n^{\frac{d}{d-(k-1)\alpha}} e_n^{k,*} \xrightarrow[n\to \infty]{d} \Phi_{(k-1)\alpha-d},
\end{equation*}
where $\Phi_{(k-1)\alpha-d}$ denotes a Fr\'echet distribution with parameter $(k-1)\alpha-d$, and the constants $C_k, M_k$ are as in Theorem \ref{thm:localization_long_cliques}.
\end{corollary}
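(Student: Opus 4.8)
The plan is to deduce the Fréchet limit directly from the Poisson convergence in Theorem~\ref{lem: wnc} via the standard duality between counting exceedances and the maximum. First I would observe that, for any $r\in(0,\infty)$, the event $\{e_n^{k,*}\le r_n(r)\}$ — where $r_n(r)$ denotes the sequence defined in Theorem~\ref{lem: wnc} for the given $r$ — is precisely the event that no $k$-clique contains an edge of length exceeding $r_n(r)$, i.e. $\{W_n^k(r_n(r))=0\}$. Hence, writing $c_k=\bigl(\tfrac{2\Gamma(1+d/2)((k-1)\alpha-d)}{dC_d\pi^{d/2}M_k}\bigr)^{1/((k-1)\alpha-d)}$ and $a_n=n^{d/(d-(k-1)\alpha)}$, so that $a_n c_k\,r_n(r)=r$ after unwinding the definition in Theorem~\ref{lem: wnc} (one must check the exponents and constants match: $r_n(r)=c_k^{-1}\,r\,n^{d/((k-1)\alpha-d)}$, and since $a_n=n^{-d/((k-1)\alpha-d)}$ we indeed get $a_n c_k r_n(r)=r$), we have
\begin{equation*}
\mathbb{P}\bigl(a_n c_k\, e_n^{k,*}\le r\bigr)=\mathbb{P}\bigl(e_n^{k,*}\le r_n(r)\bigr)=\mathbb{P}\bigl(W_n^k(r_n(r))=0\bigr).
\end{equation*}

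Next I would apply Theorem~\ref{lem: wnc}: for each fixed $r>0$, $W_n^k(r_n(r))\xrightarrow{d} W^k$ with $W^k$ Poisson of mean $r^{d-(k-1)\alpha}$, so in particular $\mathbb{P}(W_n^k(r_n(r))=0)\to \me^{-r^{d-(k-1)\alpha}}=\exp(-r^{-((k-1)\alpha-d)})$. This is exactly the cumulative distribution function of $\Phi_{(k-1)\alpha-d}$ evaluated at $r$. Since this holds for every continuity point $r>0$ of the limiting law, and the limit is a genuine distribution function that is continuous on $(0,\infty)$, convergence in distribution of $a_n c_k\,e_n^{k,*}$ to $\Phi_{(k-1)\alpha-d}$ follows. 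One small point to address is the behaviour at $r\le 0$: $e_n^{k,*}>0$ almost surely (provided a $k$-clique exists at all, which happens with probability tending to one by the second part of Theorem~\ref{thm:localization_long_cliques}, since $W_n^k(r_n)\to\infty$ for slowly growing $r_n$), so the limiting distribution puts no mass on $(-\infty,0]$, consistent with $\Phi_{(k-1)\alpha-d}$.

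The only genuinely non-routine step is the bookkeeping that turns "$e_n^{k,*}\le r_n$" into "$W_n^k(r_n)=0$" with the \emph{same} normalizing sequence as in Theorem~\ref{lem: wnc}; this requires no new probabilistic input, only that Theorem~\ref{lem: wnc} is stated for the full one-parameter family $r\in(0,\infty)$, which it is. A secondary technical remark worth including is that we need $W_n^k(r_n(r))$ to count cliques having \emph{some} edge of length at least $r_n(r)$ (not exactly $k-1$ such edges); this matches the definition of $W_n^k(r_n)$ in Section~\ref{sec:results}, so the identity with $\{e_n^{k,*}\le r_n(r)\}$ is exact. I do not anticipate any real obstacle here: the corollary is essentially the Poisson-to-Fréchet translation of the classical extreme-value folklore, and all the analytic work has already been carried out in Theorem~\ref{lem: wnc}.
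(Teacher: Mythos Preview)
Your proposal is correct and follows exactly the approach the paper intends: the paper simply states that ``Corollary~\ref{frechet} is an easy consequence'' of Theorem~\ref{lem: wnc}, and you have spelled out precisely that easy consequence via the exceedance--maximum duality $\{e_n^{k,*}<r_n(r)\}=\{W_n^k(r_n(r))=0\}$ together with the Poisson limit. Your bookkeeping with the constants $c_k$ and $a_n$ is accurate, and the minor point about strict versus non-strict inequality is immaterial since the underlying point process has no atoms.
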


In \cite{rs1,rs2} the scaling of the length of the overall longest edge (i.e. not necessarily in a clique) was found to be $n^{\frac{d}{\alpha-d}}$, up to a constant, which would correspond to the same scaling if we plugged in $k=2$ in Corollary \ref{frechet}. This would indicate that the overall largest edge-length is typically not part of a clique.

Finally, we focus on the overall distances in $k$-cliques. Let $K_k(\varepsilon_n)$ denote the number of $k$-cliques where all vertices have interdistances at most $1/\varepsilon_n$, and $K_k$ the total number of $k$-cliques.
\begin{theorem}\label{thm:cliques_total}
    For all $\varepsilon_n$ such that $\lim_{n\to\infty}\varepsilon_n=0$ and $\varepsilon_n\geq \log(n)^{-1}$,
    \begin{equation*}
        \frac{K_k(\varepsilon_n)}{K_k} \plim 1.
    \end{equation*}
\end{theorem}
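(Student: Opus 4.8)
The plan is to show that the ``bad'' cliques---those with at least one edge of length exceeding $1/\varepsilon_n$---are asymptotically negligible compared with the total number $K_k$ of $k$-cliques. Concretely, it suffices to prove that $\Exp{K_k - K_k(\varepsilon_n)} = o\big(\Exp{K_k}\big)$ and then apply Markov's inequality together with a lower bound on $K_k$ that holds with high probability, which will already be available from the Poisson/CLT estimates obtained in the proofs of Theorems~\ref{thm:localization_long_cliques}--\ref{lem: wnc}. Since $r_n := 1/\varepsilon_n \leq \log n$ certainly satisfies $1 \ll r_n \ll n^{d/((k-1)\alpha - d)}$, Theorem~\ref{thm:localization_long_cliques} immediately gives
\begin{equation*}
    \Exp{K_k - K_k(\varepsilon_n)} = \Exp{W_n^k(1/\varepsilon_n)} \sim \frac{dC_d\pi^{d/2}M_k}{2\Gamma(1+\tfrac{d}{2})((k-1)\alpha-d)}\, n^d \varepsilon_n^{(k-1)\alpha - d},
\end{equation*}
which is $o(n^d)$ because $\varepsilon_n \to 0$ and $(k-1)\alpha - d > 0$.

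The second ingredient is a matching lower bound on $\Exp{K_k}$, and more importantly on $K_k$ itself with high probability. By the multivariate Mecke formula, $\Exp{K_k} = \Theta(n^d)$: the dominant contribution comes from $k$ points all lying within an $O(1)$ neighbourhood of each other, and the integral
\begin{equation*}
    \int_{\mathcal{T}_n^d}\int_{(\mathbb{R}^d)^{k-1}} \prod_{1\leq u<v\leq k} g(\mathbf{x}_u,\mathbf{x}_v)\, d\mathbf{x}_2\cdots d\mathbf{x}_k\, d\mathbf{x}_1
\end{equation*}
factorizes, for $n$ large, into $\mathrm{vol}(\mathcal{T}_n^d) = C_d n^d$ times a finite constant of the same flavour as $M_k$ (finiteness follows exactly as for $M_k$ in~\eqref{mk}, using $\alpha > d$ so that $g(\mathbf{x},\mathbf{0})$ is integrable at infinity). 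To upgrade this expectation bound to a statement about $K_k$ with high probability, I would invoke the concentration already established for clique counts in this model: either the central limit theorem of Theorem~\ref{thm:localization_long_cliques} (applied with a slowly growing truncation, or noting that $K_k$ dominates a comparable restricted count), or a direct second-moment / variance computation showing $\Var{K_k} = o\big(\Exp{K_k}^2\big)$, which gives $K_k / \Exp{K_k} \plim 1$ and in particular $K_k \geq \tfrac12 C n^d$ with high probability.

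Combining the two ingredients: for any $\delta > 0$,
\begin{equation*}
    \Prob{\frac{K_k - K_k(\varepsilon_n)}{K_k} > \delta} \leq \Prob{K_k < \tfrac12 C n^d} + \Prob{K_k - K_k(\varepsilon_n) > \tfrac{\delta}{2} C n^d},
\end{equation*}
and the first term vanishes by the concentration of $K_k$, while the second vanishes by Markov's inequality since $\Exp{K_k - K_k(\varepsilon_n)} = o(n^d)$. This yields $(K_k - K_k(\varepsilon_n))/K_k \plim 0$, equivalently $K_k(\varepsilon_n)/K_k \plim 1$.

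The main obstacle is the concentration of $K_k$: one must ensure that $K_k$ does not itself fluctuate on the scale $n^d$, since otherwise dividing by it could destroy the convergence. The cleanest route is a variance bound via the Poincaré (Efron--Stein) inequality for Poisson functionals, estimating the add-one-point and remove-one-point differences of $K_k$; the add-one-point difference at a point $\mathbf{x}$ is the number of $(k-1)$-cliques that $\mathbf{x}$ completes, whose expectation is $O(1)$, and a short computation gives $\Var{K_k} = O(n^d) = o(\Exp{K_k}^2)$. Alternatively, if the proof of Theorem~\ref{thm:localization_long_cliques} already produces such a variance estimate for $W_n^k$, the same machinery applies verbatim to $K_k$ with the threshold removed, and no new work is needed.
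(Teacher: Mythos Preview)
Your proposal is correct and follows essentially the same route as the paper: bound $\Exp{K_k-K_k(\varepsilon_n)}=\Exp{W_n^k(1/\varepsilon_n)}=O(n^d\varepsilon_n^{(k-1)\alpha-d})$ via Lemma~\ref{lem:cliqueexp}, use Markov on this difference, and combine with a second-moment concentration of the clique count at order $n^d$. The only cosmetic difference is that the paper concentrates $K_k(\varepsilon_n)$ (invoking the variance machinery of Lemma~\ref{lem:varcliques} verbatim, since all edges there are short) whereas you propose concentrating $K_k$ directly; once either quantity is shown to be $\Theta_{\mathbb P}(n^d)$ the two arguments are interchangeable.
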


\paragraph{Simulations.}
To illustrate our results, we provide some simulations, where we simulate a soft random geometric graph on a (one-dimensional) torus and compute the empirical distribution function of the longest edge among all triangles. We plot the empirical cumulative distribution function (see Figure \ref{fig:simf2}) derived from repeated simulations of the length of the longest edge among all triangles (in dimension $d=1$ for the value $\alpha=4$), normalized by 
$$ \Big[\frac{\pi \Gamma(\frac{3}{4})}{7\Gamma(\frac{3}{2})^2} \Big]^{\frac{1}{7}} n^{\frac{1}{7}}.$$ 
According to Corollary \ref{frechet}, this normalized length approximates a Fr\'echet distribution with parameter $7$, which is also supported by the simulations.


\begin{figure}[tbp]
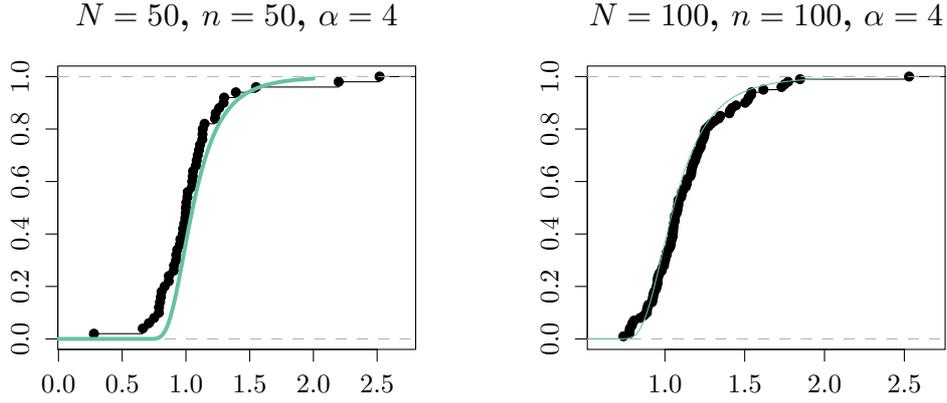

	\begin{minipage}[t]{0.4\textwidth}
 \begingroup
\tikzset{every picture/.style={scale=0.75}}%

		\input{sim5}
  \endgroup
	\end{minipage}
 \hspace{1.5 cm}
	\begin{minipage}[t]{0.4\textwidth}
  \begingroup
\tikzset{every picture/.style={scale=0.75}}%
		\input{sim6}
  \endgroup
	\end{minipage}
 \caption{Empirical distribution function of the normalized largest edge-length among all triangles derived from $N$ simulations (points) compared with the distribution function of the Fr\'echet distribution from Corollary~\ref{frechet} (green line).}
 	\label{fig:simf2}
\end{figure}

\section{Proof of Theorem~\ref{thm:localization_long_cliques}}\label{sec:proofs}
In this section, we prove Theorem~\ref{thm:localization_long_cliques}. We first focus on triangles, and then show how these results extend to the $k$-clique setting.

\subsection{Special case: triangles}
Recall that by assumption the intensity of the Poisson point process equals $1$. 
Let $\mathbf{0}$ denote the zero vector and $\mathbf{r_n}=[r_n,0,\dots,0]\in \mathbb{R}^d$. Let $\Exp{T_n(r_n)}$ be the average number of triangles that a given edge of length $r_n$ is part of. We compute
\begin{equation}\label{eq:Etriangn}
    \Exp{T_n(r_n)}= \int_{\mathcal{T}_n^d}g(\mathbf{x},\mathbf{0})g(x,\mathbf{r_n})d\mathbf{x} .
\end{equation}
Then
\begin{equation*}
    \Exp{T_n(r_n)}= \int_{\mathcal{T}_n^d}(1-\exp(-|\mathbf{x}|_T^{-\alpha}))(1-\exp(-|\mathbf{x}-\mathbf{r}_n|_T^{-\alpha}))dx.
\end{equation*}
The following lemma then gives an explicit expression for $\Exp{T_n(r_n)}$:
\begin{lemma}\label{lem:Tn}
Let $1 \ll r_n\ll n$ and $\alpha>d$. Then,
\begin{equation*}
\frac{\Exp{T_n(r_n)}}{ r_n^{-\alpha}}= \frac{2\pi^{d/2}\Gamma(1-\frac{d}{\alpha})}{\Gamma(1+\frac{d}{2})}(1+o(1)) .
\end{equation*}
\end{lemma}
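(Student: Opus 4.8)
\emph{Proof strategy.} The plan is to localise the integral in \eqref{eq:Etriangn} to small neighbourhoods of the two endpoints $\mathbf{0}$ and $\mathbf{r}_n$, where exactly one of the factors $g(\cdot,\mathbf{0})$, $g(\cdot,\mathbf{r}_n)$ is essentially $1$ and the other is asymptotically $r_n^{-\alpha}$, and to show that the complement contributes a lower-order term. Fix a scale $\rho_n$ with $\rho_n\to\infty$ and $\rho_n=o(r_n)$ (e.g.\ $\rho_n=\sqrt{r_n}$), and write $\mathcal{T}_n^d=B_0\cup B_1\cup R_n$ with $B_0=\{|\mathbf{x}|_T\le\rho_n\}$, $B_1=\{|\mathbf{x}-\mathbf{r}_n|_T\le\rho_n\}$ and $R_n$ the remainder. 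Since $\rho_n<n/2$ and $r_n<n/2$ eventually, on $B_0$ the torus metric agrees with the Euclidean one, so $|\mathbf{x}|_T=|\mathbf{x}|$ there, while $|\mathbf{x}-\mathbf{r}_n|_T=r_n+O(\rho_n)=r_n(1+o(1))$ uniformly on $B_0$; as $1-\me^{-t}=t(1+O(t))$ as $t\to 0$ this gives $g(\mathbf{x},\mathbf{r}_n)=r_n^{-\alpha}(1+o(1))$ uniformly on $B_0$. Hence
\begin{equation*}
  \int_{B_0} g(\mathbf{x},\mathbf{0})g(\mathbf{x},\mathbf{r}_n)\,d\mathbf{x}
  = r_n^{-\alpha}(1+o(1))\int_{|\mathbf{x}|\le\rho_n}\bigl(1-\me^{-|\mathbf{x}|^{-\alpha}}\bigr)\,d\mathbf{x},
\end{equation*}
and the integral over $B_1$ has the same asymptotics by the same argument applied around $\mathbf{r}_n$ (using that $-\mathbf{r}_n$ is also at torus-distance $r_n$ from $\mathbf{0}$).

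\emph{Evaluating the constant.} Because $\alpha>d$, the map $\mathbf{x}\mapsto 1-\me^{-|\mathbf{x}|^{-\alpha}}$ is integrable on $\mathbb{R}^d$ (bounded near $0$, comparable to $|\mathbf{x}|^{-\alpha}$ at infinity), so $\int_{|\mathbf{x}|\le\rho_n}(1-\me^{-|\mathbf{x}|^{-\alpha}})\,d\mathbf{x}\to\int_{\mathbb{R}^d}(1-\me^{-|\mathbf{x}|^{-\alpha}})\,d\mathbf{x}$. Passing to polar coordinates, substituting $u=\rho^{-\alpha}$, and integrating by parts yields
\begin{equation*}
  \int_{\mathbb{R}^d}\bigl(1-\me^{-|\mathbf{x}|^{-\alpha}}\bigr)\,d\mathbf{x}
  = \frac{2\pi^{d/2}}{\Gamma(d/2)}\cdot\frac{1}{\alpha}\int_0^\infty (1-\me^{-u})u^{-d/\alpha-1}\,du
  = \frac{2\pi^{d/2}}{\Gamma(d/2)}\cdot\frac{\Gamma(1-\frac{d}{\alpha})}{d}
  = \frac{\pi^{d/2}\Gamma(1-\frac{d}{\alpha})}{\Gamma(1+\frac{d}{2})},
\end{equation*}
using $d\,\Gamma(d/2)=2\Gamma(1+d/2)$. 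Adding the $B_0$ and $B_1$ contributions gives precisely $\tfrac{2\pi^{d/2}\Gamma(1-d/\alpha)}{\Gamma(1+d/2)}\,r_n^{-\alpha}(1+o(1))$.

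\emph{The remainder.} It remains to show $\int_{R_n} g(\mathbf{x},\mathbf{0})g(\mathbf{x},\mathbf{r}_n)\,d\mathbf{x}=o(r_n^{-\alpha})$. Split $R_n$ according to whether $|\mathbf{x}|_T\le r_n/2$, $|\mathbf{x}-\mathbf{r}_n|_T\le r_n/2$, or both distances exceed $r_n/2$. On $\{\rho_n<|\mathbf{x}|_T\le r_n/2\}$ the triangle inequality gives $|\mathbf{x}-\mathbf{r}_n|_T\ge r_n/2$, and with $g\le 1$, $g(\mathbf{x},\mathbf{y})\le|\mathbf{x}-\mathbf{y}|_T^{-\alpha}$, and the Euclidean identification on this set,
\begin{equation*}
  \int_{\rho_n<|\mathbf{x}|_T\le r_n/2}\!\! g(\mathbf{x},\mathbf{0})g(\mathbf{x},\mathbf{r}_n)\,d\mathbf{x}
  \le (r_n/2)^{-\alpha}\int_{|\mathbf{x}|>\rho_n}\bigl(1-\me^{-|\mathbf{x}|^{-\alpha}}\bigr)\,d\mathbf{x}
  = r_n^{-\alpha}\cdot o(1),
\end{equation*}
the tail of the convergent integral tending to $0$; the region near $\mathbf{r}_n$ is symmetric. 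On $\{|\mathbf{x}|_T>r_n/2,\ |\mathbf{x}-\mathbf{r}_n|_T>r_n/2\}$ both factors are at most $(r_n/2)^{-\alpha}$, and a layer-cake estimate, together with the bound that the volume of a torus ball of radius $s$ is at most a dimensional constant times $s^d$ uniformly in $n$ (valid since $r_n\ll n$), gives $\int_{|\mathbf{x}|_T>r_n/2}|\mathbf{x}|_T^{-\alpha}\,d\mathbf{x}=O(r_n^{d-\alpha})$, so this piece is $O(r_n^{d-2\alpha})=o(r_n^{-\alpha})$. Combining all pieces yields $\Exp{T_n(r_n)}=r_n^{-\alpha}(1+o(1))\cdot\tfrac{2\pi^{d/2}\Gamma(1-d/\alpha)}{\Gamma(1+d/2)}$, which is the claim. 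The one point that needs care is the \emph{uniformity} of $g(\mathbf{x},\mathbf{r}_n)=r_n^{-\alpha}(1+o(1))$ over the growing ball $B_0$, which forces $\rho_n=o(r_n)$, together with keeping the torus-versus-Euclidean discrepancy controlled; everything else is routine estimation.
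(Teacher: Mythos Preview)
Your proof is correct and follows essentially the same route as the paper's: localise to balls of slowly growing radius around $\mathbf{0}$ and $\mathbf{r}_n$ (your $\rho_n$ is the paper's $1/\varepsilon_n$), evaluate the resulting Euclidean integral via polar coordinates and the substitution $u=\rho^{-\alpha}$ with an integration by parts, and show the complement is $o(r_n^{-\alpha})$ by splitting according to which endpoint $\mathbf{x}$ is closer to. Your three-region split of the remainder (near $\mathbf{0}$, near $\mathbf{r}_n$, far from both) is a slightly finer but equivalent version of the paper's two-region split with symmetry, and your explicit $O(r_n^{d-2\alpha})$ bound on the ``both far'' piece is arguably cleaner than the paper's somewhat loose ``$=r_n^{-\alpha}(1+o(1))$'' on that region.
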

\begin{proof}
We prove the lemma by constructing a matching upper and lower bound for the expectation.

\emph{Lower bound.}
Let $\mathcal{B}_{0,1/\varepsilon_n}$ be the $d$-dimensional ball of radius $1/\varepsilon_n$ around 0, and $\mathcal{B}_{\mathbf{r_n},1/\varepsilon_n}$ the ball of radius $1/\varepsilon_n$ around $\mathbf{r_n}$, where $\varepsilon_n\to 0$ as $n\to\infty$.
To lower bound~\eqref{eq:Etriangn} we focus on the contribution from the integral on $\mathcal{B}_{0,1/\varepsilon_n}$ and $\mathcal{B}_{\mathbf{r_n},1/\varepsilon_n}$, which are the same by symmetry.

For $0\ll r_n\ll n$, 
\begin{align}
& \int_{\mathcal{B}_{0,1/\varepsilon_n}}(1-\exp(-|\mathbf{x}|_{T}^{-\alpha}))(1-\exp(-|\mathbf{x}-\mathbf{r}_n|_{T}^{-\alpha}))d\mathbf{x}\nonumber\\
    & =  r_n^{-\alpha}(1+o(1)) \int_{\mathcal{B}_{0,1/\varepsilon_n}}(1-\exp(-|\mathbf{x}|_{T}^{-\alpha}))d\mathbf{x}.
\end{align}

Then, switching to spherical coordinates yields
\begin{align}
    &  \int_{\mathcal{B}_{0,1/\varepsilon_n}}(1-\exp(-|\mathbf{x}|_{T}^{-\alpha}))d\mathbf{x}\nonumber\\
    & = \int_0^{1/\varepsilon_n}\int_{0}^{2\pi}\int_0^\pi\dots \int_0^\pi \rho^{d-1}(1-\exp(-\rho^{-\alpha}))\sin(\phi_1)^{d-2}\dots \sin(\phi_{d-2})d\phi_1 \dots d\phi_{d-1} d\rho\nonumber\\
    & = \frac{2\pi^{d/2}}{\Gamma(\frac{d}{2})} \int_0^{1/\varepsilon_n} \rho^{d-1}(1-\exp(-\rho^{-\alpha})) d\rho.
\end{align}

Thus,
\begin{align*}
    \lim_{n\to\infty}\frac{\Exp{T_n(r_n)}}{r_n^{-\alpha}}\geq    \frac{4\pi^{d/2}}{\Gamma(\frac{d}{2})} \int_0^{\infty}\rho^{d-1}(1-\exp(-\rho^{-\alpha}))d\rho,
\end{align*}
where the extra factor 2 arises from the fact that the integral over $\mathcal{B}_{0,1/\varepsilon_n}$ and $\mathcal{B}_{\mathbf{r_n},1/\varepsilon_n}$ are the same by symmetry.
Partial integration and a substitution of $u=x^{-\alpha}$ gives
\begin{align*}
    \int_0^{\infty}x^{d-1}(1-\exp(-x^{-\alpha}))dx 
    & = \frac{\alpha}{d} \int_0^{\infty}x^{d}x^{-\alpha-1}\exp(-x^{-\alpha})dx + \frac{1}{d}x^d(1-\exp(-x^{-\alpha}))\Big|_0^\infty\nonumber\\
    & = \frac{1}{d} \int_0^{\infty}u^{-d/\alpha}\exp(-u)du =\frac{1}{d} \Gamma(1-d/\alpha),
\end{align*}
where $\Gamma$ denotes the gamma function. Taking the limit of $n\to\infty$ yields
\begin{align}\label{eq:tnlb}
    \lim_{n\to\infty}\frac{\Exp{T_n(r_n)}}{ r_n^{-\alpha}}\geq   \frac{4\pi^{d/2}}{d\Gamma(\frac{d}{2})}\Gamma(1-\frac{d}{\alpha}) = \frac{2\pi^{d/2}\Gamma(1-\frac{d}{\alpha})}{\Gamma(1+\frac{d}{2})}.
\end{align}

\emph{Upper bound.}
For an upper bound, we show that the contribution to the expected value on $\mathcal{B}_{0,1/\varepsilon_n}$ and $\mathcal{B}_{\mathbf{r_n},1/\varepsilon_n}$ dominates, and that all other contributions to the expectation are asymptotically smaller. We therefore show that the integral of~\eqref{eq:Etriangn} over areas where $\mathbf{x}\notin\mathcal{B}_{0,1/\varepsilon_n}\cup\mathcal{B}_{\mathbf{r_n},1/\varepsilon_n}$ is small. 

We first investigate the region where $|\mathbf{x}-\mathbf{r_n}|_T\geq r_n/2$. This yields
\begin{align}\label{eq:boundballfar}
  &  \int_{\mathcal{T}_n^d\setminus(\mathcal{B}_{0,1/\varepsilon_n}\cup\mathcal{B}_{\mathbf{r_n},1/\varepsilon_n}):|\mathbf{x}-\mathbf{r_n}|_T\geq r_n/2} (1-\exp(-|\mathbf{x}|_T^{-\alpha}))(1-\exp(-|\mathbf{x}-\mathbf{r}_n|_T^{-\alpha}))d\mathbf{x}\nonumber\\
  & = r_n^{-\alpha}(1+o(1))\int_{\mathcal{T}_n^d\setminus(\mathcal{B}_{0,1/\varepsilon_n}\cup\mathcal{B}_{\mathbf{r_n},1/\varepsilon_n}):|\mathbf{x}-\mathbf{r_n}|_T\geq r_n/2} (1-\exp(-|\mathbf{x}|_T^{-\alpha}))d\mathbf{x}\nonumber\\
  & \leq r_n^{-\alpha}(1+o(1))\int_{\mathcal{T}_n^d\setminus\mathcal{B}_{0,1/\varepsilon_n}} (1-\exp(-|\mathbf{x}|_T^{-\alpha}))d\mathbf{x}.
\end{align}
Furthermore, switching to spherical coordinates yields
\begin{align}\label{eq:boundballfar2}
    &\int_{\mathcal{T}_n^d\setminus\mathcal{B}_{0,1/\varepsilon_n}} (1-\exp(-|\mathbf{x}|_T^{-\alpha}))d\mathbf{x}\nonumber\\
    & \leq \int_{1/\varepsilon_n}^\infty  \int_{0}^{2\pi}\int_0^\pi\dots \int_0^\pi \rho^{d-1}(1-\exp(-\rho^{-\alpha}))\sin(\phi_1)^{d-2}\dots \sin(\phi_{d-2})d\phi_1 \dots d\phi_{d-1} d\rho \nonumber\\
    & = \frac{2\pi^{d/2}}{\Gamma(\frac{d}{2})} \int_{1/\varepsilon_n}^\infty  \rho^{d-1}(1-\exp(-\rho^{-\alpha}))d\rho.
\end{align}
Moreover, the contribution to the integral of~\eqref{eq:Etriangn} over the region where $|\mathbf{x}-\mathbf{r_n}|_T< r_n/2$ it symmetric to the contribution over the region where $|\mathbf{x}|_T< r_n/2$, which equals
\begin{align}\label{eq:boundballclose1}
  &  \int_{\mathcal{T}_n^d\setminus(\mathcal{B}_{0,1/\varepsilon_n}\cup\mathcal{B}_{\mathbf{r_n},1/\varepsilon_n}):|\mathbf{x}|_T< r_n/2} (1-\exp(-|\mathbf{x}|_T^{-\alpha}))(1-\exp(-|\mathbf{x}-\mathbf{r}_n|_T^{-\alpha}))d\mathbf{x}\nonumber\\
  & = r_n^{-\alpha}(1+o(1))\int_{\mathcal{B}_{0,r_n/2}\setminus\mathcal{B}_{0,1/\varepsilon_n}} (1-\exp(-|\mathbf{x}|_T^{-\alpha}))d\mathbf{x}.
\end{align}
Again, changing to spherical coordinates then yields
\begin{align}\label{eq:boundballclose2}
    &\int_{\mathcal{B}_{0,r_n/2}\setminus\mathcal{B}_{0,1/\varepsilon_n}} (1-\exp(-|\mathbf{x}|_T^{-\alpha}))d\mathbf{x}\nonumber\\
    & \leq \int_{1/\varepsilon_n}^{r_n/2}  \int_{0}^{2\pi}\int_0^\pi\dots \int_0^\pi \rho^{d-1}(1-\exp(-\rho^{-\alpha}))\sin(\phi_1)^{d-2}\dots \sin(\phi_{d-2})d\phi_1 \dots d\phi_{d-1} d\rho \nonumber\\
    & = \frac{2\pi^{d/2}}{\Gamma(\frac{d}{2})} \int_{1/\varepsilon_n}^{r_n/2}  \rho^{d-1}(1-\exp(-\rho^{-\alpha}))d\rho.
\end{align}

Combining~\eqref{eq:boundballfar}~\eqref{eq:boundballfar2},~\eqref{eq:boundballclose1}~\eqref{eq:boundballclose2} gives that
\begin{align}
     & \int_{\mathcal{T}_n^d\setminus({\mathcal{B}_{\mathbf{r_n},1/\varepsilon_n}\cup \mathcal{B}_{\mathbf{0},1/\varepsilon_n}})}(1-\exp(-|\mathbf{x}|_T^{-\alpha}))(1-\exp(-|\mathbf{x}-\mathbf{r}_n|_T^{-\alpha}))dx \nonumber\\
     & \leq  \tilde{C} r_n^{-\alpha}\int_{1/\varepsilon_n}^\infty  \rho^{d-1}(1-\exp(-\rho^{-\alpha}))d\rho\nonumber\\
     & = O(r_n^{-\alpha}\varepsilon_n^{\alpha-d}).
\end{align}
for some $\tilde{C}>0$.

Thus, as $d<\alpha$, this contribution is $o(r_n^{-\alpha})$ because $\varepsilon_n\to 0$. Therefore, 
\begin{equation}\label{eq:contrxlarge}
   \int_{\mathcal{T}_n^d\setminus({\mathcal{B}_{\mathbf{r_n},1/\varepsilon_n}\cup \mathcal{B}_{\mathbf{0},1/\varepsilon_n}})}(1-\exp(-|\mathbf{x}|_T^{-\alpha}))(1-\exp(-|\mathbf{x}-\mathbf{r}_n|_T^{-\alpha}))dx = o(r_n^{-\alpha}).
\end{equation}
Combining this with the integrals over $\mathcal{B}_{\mathbf{r_n},1/\varepsilon_n}$ and $\mathcal{B}_{0,1/\varepsilon_n}$ of~\eqref{eq:tnlb}, results in
\begin{equation*}
    \lim_{n\to\infty}\frac{\Exp{T_n(r_n)}}{ r_n^{-\alpha}}\leq \frac{2\pi^{d/2}\Gamma(1-\frac{d}{\alpha})}{\Gamma(1+\frac{d}{2})}.
\end{equation*}
\end{proof}

We now integrate over all edges of length at least $r_n$ to obtain the expected number of triangles with one edge of length at least $r_n$:

\begin{lemma} \label{lem: wn}
Let $1 \ll r_n\ll n$ and $\alpha>d$. Then,
    \begin{equation*}
    \lim_{n\to\infty}\frac{\Exp{W_n^3(r_n)}}{n^{d}r_n^{d-2\alpha}} = \frac{dC_d\pi^{d}\Gamma(1-\frac{d}{\alpha})}{\Gamma(1+\frac{d}{2})^2(2\alpha-d)}.
\end{equation*}
\end{lemma}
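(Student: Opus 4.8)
The idea is to integrate the per‑edge triangle count $\Exp{T_n(\,\cdot\,)}$ of Lemma~\ref{lem:Tn} over all edges of length at least $r_n$, via the Mecke formula for the unit‑intensity process $\mathcal P$: summing over the long edges the number of triangles each of them lies in gives $\Exp{W_n^3(r_n)}$, and by the multivariate Mecke equation this equals
\begin{equation*}
\frac12\int_{\mathcal T_n^d}\int_{\mathcal T_n^d}\ind{|\mathbf x-\mathbf y|_T\ge r_n}\,g(\mathbf x,\mathbf y)\Big(\int_{\mathcal T_n^d}g(\mathbf x,\mathbf z)\,g(\mathbf y,\mathbf z)\,d\mathbf z\Big)\,d\mathbf x\,d\mathbf y .
\end{equation*}
The bracketed integral is exactly the quantity $\Exp{T_n(\,\cdot\,)}$ of~\eqref{eq:Etriangn} evaluated at $\mathbf y-\mathbf x$; by translation invariance of the torus and rotational symmetry (for $|\mathbf x-\mathbf y|<n/2$) it depends only on $s=|\mathbf x-\mathbf y|_T$. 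Integrating out $\mathbf x$, which contributes the torus volume $C_dn^d$, and passing to spherical coordinates in $\mathbf v=\mathbf y-\mathbf x$ yields
\begin{equation*}
\Exp{W_n^3(r_n)}=\frac{C_d n^d\,\pi^{d/2}}{\Gamma(d/2)}\int_{r_n}^{R_n}s^{d-1}\big(1-\me^{-s^{-\alpha}}\big)\,\Exp{T_n(s)}\,ds ,
\end{equation*}
where $R_n=\Theta(n)$ is the largest value of $|\cdot|_T$ on $\mathcal T_n^d$.

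To estimate the $s$-integral I would split it at a cut-off $\omega_n$ with $r_n\ll\omega_n\ll n$, for instance $\omega_n=\sqrt{r_n n}$. On $r_n\le s\le\omega_n$ one has $s\to\infty$, so Lemma~\ref{lem:Tn} (applied with the required uniformity) gives $\Exp{T_n(s)}=\frac{2\pi^{d/2}\Gamma(1-d/\alpha)}{\Gamma(1+d/2)}\,s^{-\alpha}(1+o(1))$, and also $1-\me^{-s^{-\alpha}}=s^{-\alpha}(1+o(1))$ uniformly; hence the integrand is $\frac{2\pi^{d/2}\Gamma(1-d/\alpha)}{\Gamma(1+d/2)}s^{d-1-2\alpha}(1+o(1))$, and since $2\alpha-d>0$,
\begin{equation*}
\int_{r_n}^{\omega_n}s^{d-1-2\alpha}\,ds=\frac{r_n^{d-2\alpha}-\omega_n^{d-2\alpha}}{2\alpha-d}=\frac{r_n^{d-2\alpha}}{2\alpha-d}\big(1+o(1)\big),
\end{equation*}
using $\omega_n\gg r_n$. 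On $\omega_n\le s\le R_n$ I would use the crude uniform bound $\Exp{T_n(s)}=\bigO{s^{-\alpha}}$—obtained exactly as in the proof of Lemma~\ref{lem:Tn}, by splitting according to whether the third point lies near one of the two endpoints of the edge or far from both—together with $1-\me^{-s^{-\alpha}}\le s^{-\alpha}$, so that this part is $\bigO{n^d\int_{\omega_n}^{\infty}s^{d-1-2\alpha}ds}=\bigO{n^d\omega_n^{d-2\alpha}}=o\big(n^d r_n^{d-2\alpha}\big)$, again since $\omega_n\gg r_n$.

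Collecting the pieces,
\begin{equation*}
\Exp{W_n^3(r_n)}=\frac{C_d n^d\pi^{d/2}}{\Gamma(d/2)}\cdot\frac{2\pi^{d/2}\Gamma(1-d/\alpha)}{\Gamma(1+d/2)}\cdot\frac{r_n^{d-2\alpha}}{2\alpha-d}\,\big(1+o(1)\big),
\end{equation*}
and simplifying via $\Gamma(1+d/2)=\frac{d}{2}\Gamma(d/2)$, so that $\Gamma(d/2)\Gamma(1+d/2)=\frac{2}{d}\Gamma(1+d/2)^2$, turns the prefactor into $\frac{dC_d\pi^{d}\Gamma(1-d/\alpha)}{\Gamma(1+d/2)^2(2\alpha-d)}$, which is the claim. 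The part requiring the most care is making Lemma~\ref{lem:Tn} uniform over $r_n\le s\le\omega_n$ and establishing the companion uniform bound $\Exp{T_n(s)}=\bigO{s^{-\alpha}}$ for all $s$ up to $R_n=\Theta(n)$; it is precisely through this bound that the hypothesis $r_n\ll n$ is used, ensuring that the regime $s\asymp n$—where the torus geometry and the asymptotics of Lemma~\ref{lem:Tn} break down—contributes only lower-order terms.
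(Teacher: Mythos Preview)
Your approach is essentially the same as the paper's: feed Lemma~\ref{lem:Tn} into an integral over all edges of length at least $r_n$, extract the factor $C_dn^d$ by translation invariance, pass to spherical coordinates, and simplify via $\Gamma(1+d/2)=\tfrac{d}{2}\Gamma(d/2)$. The only real difference is that you are more explicit about the upper end of the radial integral---introducing the intermediate cutoff $\omega_n$ and the uniform bound $\Exp{T_n(s)}=O(s^{-\alpha})$ to control the range $s\asymp n$---whereas the paper simply writes $\int_{r_n}^{\infty}\rho^{\,d-1-2\alpha}\,d\rho$ directly.
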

\begin{proof}[Proof of Lemma~\ref{lem: wn}]
Lemma~\ref{lem:Tn} gives the expected number of triangles containing a given edge of length $r_n$. 
To obtain the expected number of triangles from one vertex at the origin that contain at least one edge of length at least $r_n$, we integrate $T_n(x)$ and the probability $g(\mathbf{x},0)$ that a vertex at the origin forms an edge with a vertex at location $\mathbf{x}$ over the region with distance at least $r_n$ from the origin. Thus, using the notation $g(r,0)$ for $g(\mathbf{x},0)$ with $|\mathbf{x}|_T = r$, we get
\begin{align}\label{eq:intlongedge}
    & \frac{2\pi^{d/2}\Gamma(1-\frac{d}{\alpha})}{\Gamma(1+\frac{d}{2})} \int_{r_n}^\infty\int_{0}^{2\pi}\int_0^\pi\dots \int_0^\pi \rho^{d-1-\alpha}g(\rho,0)\sin(\phi_1)^{d-2}\dots \sin(\phi_{d-2})d\phi_1 \dots d\phi_{d-1} d\rho\nonumber\\
    &  = \frac{4\pi^{d}\Gamma(1-\frac{d}{\alpha})}{\Gamma(\frac{d}{2})\Gamma(1+\frac{d}{2})}\int_{r_n}^\infty \rho^{d-1-2\alpha}d\rho \big( 1 + o(1) \big) \nonumber\\
    & =  \frac{4\pi^{d}\Gamma(1-\frac{d}{\alpha})r_n^{d-2\alpha}}{\Gamma(\frac{d}{2})\Gamma(1+\frac{d}{2})(2\alpha-d)}\big( 1 + o(1) \big).
\end{align}
As the volume of the $d$-dimensional torus equals $C_dn^d$ and the intensity of the Poisson point process equals 1, there are on average $C_dn^d$ vertices, and in this manner every triangle is counted twice. Thus,
    \begin{equation}\label{eq1}
    \lim_{n\to\infty}\frac{\Exp{W^3_n(r_n)}}{n^{d}r_n^{d-2\alpha}} =  \frac{2C_d\pi^{d}\Gamma(1-\frac{d}{\alpha}) }{\Gamma(\frac{d}{2})\Gamma(1+\frac{d}{2})(2\alpha-d)}=  \frac{dC_d\pi^{d}\Gamma(1-\frac{d}{\alpha})}{\Gamma(1+\frac{d}{2})^2(2\alpha-d)},
\end{equation}
where we have used that $x\Gamma(x)=\Gamma(x+1)$.

\end{proof}

\subsection{Generalizing to $k$-cliques}
In this subsection, we generalize Lemma~\ref{lem: wn} from triangles to $k$-cliques to obtain the expected number of $k$-cliques with one edge of length at least $r_n$:
\begin{lemma}\label{lem:cliqueexp}
Let $1\ll r_n\ll n$ and $\alpha>d$. Then, for all $k\geq 3$,
    \begin{equation*}
    \lim_{n\to\infty}\frac{\Exp{W_n^k(r_n)}}{n^{d}r_n^{d-(k-1)\alpha}} =  \frac{dC_d\pi^{d/2} M_k}{2\Gamma(1+\frac{d}{2})((k-1)\alpha-d)},
\end{equation*}
where $M_k$ is as in \eqref{mk}.
\end{lemma}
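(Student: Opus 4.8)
The plan is to imitate the two-step argument used for triangles: first the $k$-clique analogue of Lemma~\ref{lem:Tn}, then the integration over long edges carried out in Lemma~\ref{lem: wn}.

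\emph{Step 1 (number of $k$-cliques on a fixed long edge).} Keeping $\mathbf{r_n}=[r_n,0,\dots,0]$, let $C^k_n(r_n)$ denote the expected number of $k$-cliques containing the edge $\{\mathbf{0},\mathbf{r_n}\}$. Applying the multivariate Mecke formula to the remaining $k-2$ vertices,
\[
 C^k_n(r_n)=\frac{1}{(k-2)!}\int_{(\mathcal{T}_n^d)^{k-2}}\prod_{i=1}^{k-2}g(\mathbf{x_i},\mathbf{0})\,g(\mathbf{x_i},\mathbf{r_n})\prod_{1\le u<v\le k-2}g(\mathbf{x_u},\mathbf{x_v})\,d\mathbf{x_1}\cdots d\mathbf{x_{k-2}},
\]
and I would prove $C^k_n(r_n)\sim c_k\, r_n^{-(k-2)\alpha}$ as $n\to\infty$, where $c_k$ is the constant built from the integral in~\eqref{mk} (for $k=3$ this is Lemma~\ref{lem:Tn}). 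As for triangles, the integral concentrates on the two ``balls'' where all of $\mathbf{x_1},\dots,\mathbf{x_{k-2}}$ lie within distance $1/\varepsilon_n$ of $\mathbf{0}$, respectively of $\mathbf{r_n}$; on the first ball each factor $g(\mathbf{x_i},\mathbf{r_n})$ is $r_n^{-\alpha}(1+o(1))$, extracting $r_n^{-(k-2)\alpha}$, while $\prod_i g(\mathbf{x_i},\mathbf{0})\prod_{u<v}g(\mathbf{x_u},\mathbf{x_v})$ integrates (as $\varepsilon_n\to0$) to the integral appearing in~\eqref{mk}; the second ball contributes identically by symmetry, which accounts for the factor $2$ in $M_k$.

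\emph{Step 2 (integrating over long edges).} Exactly as in the proof of Lemma~\ref{lem: wn}, I would put a vertex at the origin and integrate over its long neighbours; after passing to spherical coordinates and using $C^k_n(\rho)\sim c_k\rho^{-(k-2)\alpha}$,
\[
 \int_{|\mathbf{x}|_T\ge r_n} g(\mathbf{x},\mathbf{0})\,C^k_n(|\mathbf{x}|_T)\,d\mathbf{x}=\frac{2\pi^{d/2}c_k}{\Gamma(d/2)}\int_{r_n}^{\infty}\rho^{\,d-1-(k-1)\alpha}\,d\rho\,(1+o(1))=\frac{2\pi^{d/2}c_k}{\Gamma(d/2)\big((k-1)\alpha-d\big)}\,r_n^{\,d-(k-1)\alpha}(1+o(1)),
\]
the integral being finite since $(k-1)\alpha>d$. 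Multiplying by the average vertex count $C_dn^d$ and dividing by the combinatorial factor that records how many times a typical (localised) $k$-clique is produced this way gives the stated constant; the identities $x\Gamma(x)=\Gamma(x+1)$ and $\int_0^\infty\rho^{d-1}(1-e^{-\rho^{-\alpha}})\,d\rho=\tfrac{1}{d}\Gamma(1-\tfrac{d}{\alpha})$ (already used for triangles) produce the closed forms, and $k=3$ recovers Lemma~\ref{lem: wn} and the expression for $M_3$.

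The main obstacle is the error analysis underlying Step~1: showing that every configuration in which the $k-2$ auxiliary vertices do \emph{not} all lie near a single endpoint contributes only $o(r_n^{-(k-2)\alpha})$. In contrast to the triangle case, ``away from both balls'' is now a union of many regions --- the vertices $\mathbf{x_i}$ may split between the neighbourhoods of $\mathbf{0}$ and $\mathbf{r_n}$, or some may sit genuinely far from both --- and each has to be estimated uniformly. The mechanism is the one used for the upper bound in Lemma~\ref{lem:Tn}: on long edges one bounds $g(\mathbf{x},\mathbf{y})\le\min(1,|\mathbf{x}-\mathbf{y}|_T^{-\alpha})$, and any vertex far from both endpoints, or any pair of auxiliary vertices attached to different endpoints, forces an extra such factor whose torus integral tends to $0$ (because $\alpha>d$ gives $\int_{\mathcal{T}_n^d\setminus\mathcal{B}_{0,R}}|\mathbf{x}|_T^{-\alpha}\,d\mathbf{x}\to0$ as $R\to\infty$, uniformly in $n$), so that the contribution is of strictly smaller order than the two-ball term. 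Making this uniform over all shapes and over the full range $1\ll r_n\ll n$ --- working with the torus metric throughout and only passing to the $\mathbb{R}^d$-integral~\eqref{mk} in the limit, with wrap-around and the cutoff $1/\varepsilon_n$ absorbed into $(1+o(1))$ --- is the technical core; once Step~1 is established, Step~2 is a direct repetition of the triangle computation.
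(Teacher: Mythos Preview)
Your two-step skeleton --- first show that the expected number of $k$-cliques on a fixed long edge satisfies $\Exp{A_k(r_n)}\sim M_k\,r_n^{-(k-2)\alpha}$, then integrate over the long edge exactly as in Lemma~\ref{lem: wn} --- is precisely the paper's route, and your lower bound (restrict all $k-2$ auxiliary vertices to a single ball of radius $1/\varepsilon_n$ around one endpoint) is identical to the paper's.

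The difference lies in the upper bound for Step~1. The paper drops the mutual factors $\prod_{u<v}g(\mathbf{x_u},\mathbf{x_v})$ so that the $(k-2)$-fold integral factorises into $k-2$ independent triangle integrals~\eqref{eq:invoke}, and then invokes~\eqref{eq:contrxlarge} to show that any $\mathbf{x_i}$ lying outside both balls contributes $o(r_n^{-(k-2)\alpha})$. You instead keep the mutual factors and treat the complementary region by a direct case split. Your version is actually the more complete one here: the ``split'' configuration --- some auxiliary vertices in $\mathcal{B}_{0,1/\varepsilon_n}$ and the rest in $\mathcal{B}_{\mathbf{r_n},1/\varepsilon_n}$ --- is \emph{not} killed by the triangle-product bound (each individual triangle factor is still of order $r_n^{-\alpha}$, so the product is of the same order as the main term), and it is exactly the cross factor $g(\mathbf{x_u},\mathbf{x_v})=O(r_n^{-\alpha})$ between auxiliary vertices attached to different endpoints that makes this contribution $o(r_n^{-(k-2)\alpha})$, which is the mechanism you spell out. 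The paper's factorisation trick buys a one-line reduction to Lemma~\ref{lem:Tn} for the ``far from both endpoints'' vertices; your case analysis is heavier but handles the split case that the paper's argument leaves implicit.
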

\begin{proof}
Define $A_k(r_n)$ as the number of $k$-cliques that a given edge of length $r_n$ is part of. We first compute its expected value:
\begin{equation}\label{eq:EKn}
    \Exp{A_k(r_n)} = \int_{\mathcal{T}_n^d}\dots \int_{\mathcal{T}_n^d}\prod_{i=1}^{k-2}g(\mathbf{x_i},\mathbf{0})g(\mathbf{x_i},\mathbf{r_n})\prod_{1\leq u<v\leq k-2}g(\mathbf{x_u},\mathbf{x_v})d\mathbf{x_1}\dots d\mathbf{x_{k-2}}.
\end{equation}
Again, we lower bound by the contribution from  $\mathbf{x}\in \mathcal{B}_{0,1/\varepsilon_n}$, such that $g(\mathbf{x},\mathbf{r_n})= r_n^{-\alpha}(1+o(1))$,
and $\mathbf{x}\in\mathcal{B}_{\mathbf{r_n},1/\varepsilon_n}$ such that $g(\mathbf{x},\mathbf{0})= r_n^{-\alpha}(1+o(1))$.
As a lower bound, this gives
\begin{align}\label{eq:Knesp}
     \Exp{A_k(r_n)} & \geq  2r_n^{-(k-2)\alpha}\int_{\mathcal{B}_{0,1/\varepsilon_n}}\dots \int_{\mathcal{B}_{0,1/\varepsilon_n}}\prod_{i=1}^{k-2}g(\mathbf{x_i},\mathbf{0})\nonumber\\
     & \quad \quad \times \prod_{1\leq u<v\leq k-2}g(\mathbf{x_u},\mathbf{x_v})d\mathbf{x_1}\dots d\mathbf{x_{k-2}}(1+o(1)).
\end{align}
Taking the limit of $n\to\infty$ then yields
\begin{align}\label{eq:Knklim}
    &\lim_{n\to\infty} \frac{\Exp{A_k(r_n)}}{r_n^{-(k-2)\alpha}} \geq \nonumber\\
    & 2\int_{[-\infty,\infty]^d}\dots \int_{[-\infty,\infty]^d}\prod_{i=1}^{k-2}g(\mathbf{x_i},\mathbf{0})\prod_{1\leq u<v\leq k-2}g(\mathbf{x_u},\mathbf{x_v})d\mathbf{x_1}\dots d\mathbf{x_{k-2}},
\end{align}
where the factor 2 arises from the symmetric contribution from $\mathcal{B}_{\mathbf{r_n},1/\varepsilon_n}$.

As an upper bound, we bound the probability that a $k$-clique with long edge between 0 and $\mathbf{r}_n$ appears by the probability that $k-2$ triangles appear with the vertices at $0$ and at $r_n$. This can be written as
\begin{equation} \label{eq:invoke}
   \Exp{A_k(r_n)} \leq   \Big(\int_{\mathcal{T}_n^d}g(\mathbf{x},\mathbf{0})g(\mathbf{x},\mathbf{r_n})d\mathbf{x}\Big)^{k-2}.
\end{equation}
Now the contribution to this integral from $\mathbf{x}\notin \mathcal{B}_{0,1/\varepsilon_n}\cup \mathcal{B}_{\mathbf{r_n},1/\varepsilon_n}$ can be bounded using ~\eqref{eq:contrxlarge}. Thus, the expected number of $k$-cliques with at least one edge of length $r_n$ and  $\mathbf{x}\notin \mathcal{B}_{0,1/\varepsilon_n}\cup \mathcal{B}_{\mathbf{r_n},1/\varepsilon_n}$,  $\bar{A}_k(r_n,\varepsilon_n)$, satisfies
\begin{align}
    \Exp{\bar{A}_k(r_n,\varepsilon_n)} &  \leq Kr_n^{-\alpha(k-2)}\Bigg( \int_{1/\varepsilon_n}^\infty x^{d-1}(1-\exp(-x^{-\alpha}))dx \Bigg) ^{k-2}\nonumber\\
    & = O(r_n^{-\alpha(k-2)}\varepsilon_n^{d-\alpha})=o(r_n^{-\alpha(k-2)}),
\end{align}
where the last equality holds because $\varepsilon_n\to 0$ as $n\to\infty$. 
Thus, this contribution is sufficiently small compared to the contribution from $\mathcal{B}_{\mathbf{0},1/\varepsilon_n}$ and $\mathcal{B}_{\mathbf{r_n},1/\varepsilon_n}$ of~\eqref{eq:Knesp} when $n$ becomes large. This shows that 
\begin{align}\label{eq:ank}
    & \lim_{n\to\infty} \frac{\Exp{A_k(r_n)}}{r_n^{-(k-2)\alpha}} \nonumber\\
    & =  2\int_{[-\infty,\infty]^d}\dots \int_{[-\infty,\infty]^d}\prod_{i=1}^{k-2}g(\mathbf{x_i},\mathbf{0})\prod_{1\leq u<v\leq k-2}g(\mathbf{x_u},\mathbf{x_v})d\mathbf{x_1}\dots d\mathbf{x_{k-2}}.
\end{align}
We then integrate this to obtain the average number of $k$-cliques with one long edge centered at a single vertex at the origin. Using that the volume of the $d$-dimensional torus is $C_dn^d$, this yields similarly to~\eqref{eq:intlongedge} and~\eqref{eq1} that
    \begin{align}\label{eq:expwnk}
    \lim_{n\to\infty}\frac{\Exp{W_n^k(r_n)}}{n^d r_n^{d-(k-1)\alpha}}&  = \frac{\pi^{d/2} M_kC_d}{\Gamma(\frac{d}{2})((k-1)\alpha-d)}\nonumber\\
    & = \frac{dC_d\pi^{d/2} M_k}{2\Gamma(1+\frac{d}{2})((k-1)\alpha-d)},
\end{align}
where we have used that $d/2\Gamma(d/2)=\Gamma(1+d/2)$. 
Similarly, we obtain that 
\begin{equation}\label{eq:barWnk}
    \Exp{\bar{W}_k(r_n,\varepsilon_n)} = O(\varepsilon_n^{d-\alpha}n^dr_n^{d-(k-1)\alpha}),
\end{equation}
where $\bar{W}_n^k(r_n,\varepsilon)$ denotes the number of $k$-cliques with at least one edge of length at least $r_n$ that are not in $W_n^k(r_n,\varepsilon)$.
\end{proof}

Finally, we bound the variance of the number of cliques with $k-1$ long edges and all other edges of length at most $1/\varepsilon$: \
\begin{lemma}\label{lem:varcliques}
Suppose that $\varepsilon_n\geq 1/\log(n)$ and that $\lim_{n\to \infty}\varepsilon_n=0$. When $r_n\ll n^{d/((k-1)\alpha-d)}$,
    \begin{equation*}
        \lim_{n\to\infty}\frac{\Var{W_n^k(r_n,\varepsilon_n)}}{\Exp{W_n^k(r_n,\varepsilon_n)}^2} = 0.
    \end{equation*}
\end{lemma}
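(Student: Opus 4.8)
The plan is to expand the variance of $W_n^k(r_n,\varepsilon_n)$ using the Mecke formula for Poisson point processes, and to show that all cross-covariance contributions between pairs of overlapping $k$-cliques are of smaller order than $\Exp{W_n^k(r_n,\varepsilon_n)}^2$. Recall from Lemma~\ref{lem:cliqueexp} and~\eqref{eq:expwnk} that $\Exp{W_n^k(r_n,\varepsilon_n)} = \Theta(n^d r_n^{d-(k-1)\alpha})$ (the localization set $W_n^k(r_n,\varepsilon_n)$ captures the leading-order behaviour by~\eqref{eq:barWnk}), so the target is to prove that all covariance terms are $o(n^{2d} r_n^{2(d-(k-1)\alpha)})$.

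First I would write $W_n^k(r_n,\varepsilon_n)$ as a sum over unordered $k$-tuples of points of $\mathcal P$ of an indicator that the $k$ points form the required configuration: $k-1$ edges of length at least $r_n$ sharing a common endpoint (the remote vertex), all remaining $\binom{k-1}{2}$ edges present and of length at most $1/\varepsilon_n$. By the multivariate Mecke equation, $\Var{W_n^k(r_n,\varepsilon_n)}$ decomposes into a sum over $j=1,\dots,k-1$ of terms, where the $j$-th term accounts for pairs of such $k$-cliques sharing exactly $j$ common vertices. (The $j=0$ term cancels against $\Exp{W_n^k(r_n,\varepsilon_n)}^2$ up to the edge-probability correlations, which vanish since edges are independent; so effectively only $j\geq 1$ contributes, plus the $j=k$ diagonal term which is exactly $\Exp{W_n^k(r_n,\varepsilon_n)}$ and is clearly $o$ of the square by Lemma~\ref{lem:cliqueexp}.) For each $1\le j\le k-1$, the corresponding integral ranges over $2k-j$ points, with the connectivity constraints from both cliques.

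The key step is to bound each of these $2k-j$-point integrals. The structure is: the union of the two cliques has at most two ``remote'' vertices (one per clique), and all other vertices lie in balls of radius $1/\varepsilon_n$ around their respective remote vertices; furthermore the overlap forces geometric compatibility. I would treat cases according to how the shared vertices are distributed between ``close cluster'' and ``remote'' roles in each clique. The dominant case is when the two remote vertices are distinct and the $j$ shared vertices all belong to both close clusters: then one integrates the two remote vertices over the torus (giving $n^d \cdot r_n^{d-(k-1)\alpha}$ from one remote vertex's long edges, times a bounded factor, as in~\eqref{eq:intlongedge}), while the shared and unshared cluster vertices are confined to a region of volume $O(\varepsilon_n^{-d})$ and contribute the convergent integral constants of the type $M_k$. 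This produces a bound of order $n^d r_n^{d-(k-1)\alpha} \cdot \varepsilon_n^{-d(\text{something})}$, missing a full factor of $n^d$ and a full factor of $r_n^{d-(k-1)\alpha}$ relative to $\Exp{W_n^k}^2$; since $r_n^{d-(k-1)\alpha}\to 0$ and the lost $n^d$ dominates any polylog factor $\varepsilon_n^{-O(1)} = \log(n)^{O(1)}$, this term is $o(\Exp{W_n^k(r_n,\varepsilon_n)}^2)$. When the two remote vertices coincide or are forced close, one saves a full $n^d$ but gains a factor $r_n^{d-(k-1)\alpha}$ or the constraint $r_n\ll n^{d/((k-1)\alpha-d)}$ must be invoked; this is exactly where that hypothesis on $r_n$ enters — it guarantees $n^d r_n^{d-(k-1)\alpha}\to\infty$, so that $\Exp{W_n^k(r_n,\varepsilon_n)}\to\infty$ and the diagonal and low-overlap corrections are genuinely negligible.

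The main obstacle I anticipate is bookkeeping: enumerating the $O(1)$ many combinatorial types of overlap (by $j$, and by the role each shared vertex plays in each of the two cliques) and checking that every type loses at least one factor of $n^d$ without gaining back more than polylogarithmic powers of $\varepsilon_n^{-1}=\log n$. The case where a shared vertex is the remote vertex of one clique but a close-cluster vertex of the other is the most delicate — it forces a long edge constraint on a vertex that is otherwise pinned near a cluster, and one must check the resulting integral still decays. I expect each such integral to be handled by the same spherical-coordinates estimates used in Lemmas~\ref{lem:Tn} and~\ref{lem:cliqueexp}, splitting into a bounded ``near'' part and a ``far'' tail controlled by $\int_{1/\varepsilon_n}^\infty \rho^{d-1-\alpha}\,d\rho = O(\varepsilon_n^{\alpha-d})$, so no genuinely new analytic input is needed beyond careful application of~\eqref{eq:contrxlarge}.
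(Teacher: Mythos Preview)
Your proposal is correct and follows essentially the same approach as the paper: both decompose the variance via the Mecke/U-statistic formula into terms indexed by the number $s$ of shared vertices, and bound each summand by tracking which long versus short edges are ``saved'' by the overlap, arriving at bounds of the form $O(n^d r_n^{d-(k-1)\alpha}\varepsilon_n^{-O(1)})$ that are $o(\Exp{W_n^k(r_n,\varepsilon_n)}^2)$ under the hypotheses on $r_n$ and $\varepsilon_n$. The paper quotes Reitzner's variance identity for Poisson U-statistics and uses a slightly coarser two-case split (overlap containing the remote vertex versus overlap only among close-cluster vertices), whereas your more granular role-by-role case analysis --- including the mixed case you flag as delicate --- would yield the same or marginally sharper estimates, so no new idea is needed.
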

\begin{proof}
First of all, we can interpret $W_n^k(r_n,\varepsilon)$ as a $U$-statistic of order $k$ on a marked space. Because of this,~\cite[Lemma 3.5]{Reitzner2013} yields that
\begin{align}\label{eq:varustat}
   &  \Var{W_n^k(r_n,\varepsilon)}= \sum_{s=1}^ks!{k \choose s}^2\int_{\mathcal{T}_n^d}\dots \int_{\mathcal{T}_n^d} \nonumber\\
    & \times\Bigg(\int_{\mathcal{B}^{d}_{0,n}}\dots \int_{\mathcal{B}^{d}_{0,n}} f(\mathbf{x}_1,\dots,\mathbf{x}_s,\mathbf{y}_1,\dots,\mathbf{y}_{k-s}) d\mathbf{y}_1\dots  d\mathbf{y}_{k-s}\Bigg)^2 d\mathbf{x}_1\dots  d\mathbf{x}_s
\end{align}
where $f(\mathbf{x}_1,\dots,\mathbf{x}_s,\mathbf{y}_1,\dots,\mathbf{y}_{k-s})$ denotes the probability that $\mathbf{x}_1,\dots,\mathbf{x}_s,\mathbf{y}_1,\dots,\mathbf{y}_{k-s}$ forms a clique with $k-1$ edges of length at least $r_n$, and that all other edges have length at most $\varepsilon_n$. Thus, each term inside the summation measures the expected number of two 'glued' together cliques with overlap at $s$ vertices.

Define
\begin{equation}
    \tilde{g}(x)=1-\exp(-x^{-\alpha}).
\end{equation}

Now cliques in $W_n^k(r_n,\varepsilon_n)$ have $k-1$ long edges of length at least $r_n$, while all other edges have length at most $1/\varepsilon_n$.
    We first focus on the contribution to the summand in~\eqref{eq:varustat} from $s\in{ 2,\dots, k-1}$. When the overlap between the cliques occurs at the vertex incident with the long edges of length $\geq r_n$, then $s-1$ long edges overlap (and several short edges possibly as well). In this case, using~\eqref{eq:ank} shows that this summand equals
    \begin{align}\label{eq:varcontrlongedge}
       & \bigO{ \Exp{W_n^k(r_n,\varepsilon_n)}\Exp{A_{k-(s-1)}(r_n)} \tilde{g}(\varepsilon_n^{-1})^t}\nonumber\\
       &= \bigO{ \Exp{W_n^k(r_n)}\Exp{A_{k-(s-1)}(r_n)} \tilde{g}(\varepsilon_n^{-1})^t}\nonumber\\
       & = \bigO{n^dr_n^{d-2(k-1)\alpha+s\alpha }  \tilde{g}(\varepsilon_n^{-1})^t} \nonumber\\
       & =\bigO{n^dr_n^{d-(k-1)\alpha}  \tilde{g}(\varepsilon_n^{-1})^t} ,
    \end{align}
    for some $t\geq 0$.
    Here $A_{k-(s-1)}(r_n)$ is defined as in the proof of Lemma \ref{lem:cliqueexp}. Indeed, to glue two cliques together at $s-1$ long edges, one first needs to create one clique with $k-1$ long edges, which gives the $\Exp{W_n^k(r_n,\varepsilon)}$ term of Lemma~\ref{lem:cliqueexp}. Given the presence of this clique, when sharing $s-1$ long edges, this means that to generate the second clique, only a clique with $k-s$  extra long edges has to be added to generate the second clique (plus some additional short edges which appear with $O(\tilde{g}(1/\varepsilon_n))$ probability). This is equivalent to generating a new clique of size $k-s+1$, (where the +1 comes from the fact that the vertex incident to the long edges is also counted)  
    from a given edge of length at least $r_n$, giving a contribution of $\Exp{A_{k-(s-1)}(r_n)}$, computed in~\eqref{eq:ank}. 

    When $s=k$, then the term in the summand of~\eqref{eq:varustat} equals $\Exp{W_n^k(r_n,\varepsilon)}$.
    
    When the overlap occurs at only short edges however or when $s=1$ and no edges overlap at all, the second clique still needs to contain $k-1$ long edges. As the probability of a short edge is $O(g(1/\varepsilon_n))$, this means that to generate the second clique there are on average $\Exp{W_n^k(r_n,\varepsilon)}/O(g(1/\varepsilon_n)^{s(s-1)/2}n^{ds})$ options, as there are $s(s-1)/2$ short edges in the overlapping part of the cliques and on average $O(n^{ds})$ sets of $s$ vertices. combined, this gives a contribution to the summand in~\eqref{eq:varustat} of
    \begin{align}\label{eq:varcontrshortedge}
        & \bigO{ \Exp{W_n^k(r_n,\varepsilon_n)}^2/\tilde{g}(\varepsilon_n^{-1})^{s(s-1)/2}n^{ds}} \nonumber\\
        &= \bigO{n^{2d-sd}r_n^{2(d-(k-1)\alpha)}/\tilde{g}(\varepsilon_n^{-1})^{-s(s-1)/2}} \nonumber\\
        & = \bigO{n^{d}r_n^{2(d-(k-1)\alpha)}/\tilde{g}(\varepsilon_n^{-1})^{-k(k-1)/2}}
    \end{align}
    Now $\tilde{g}(\varepsilon_n^{-1}) = O(\varepsilon_n^\alpha)$. As by assumption, $\varepsilon_n\geq \log(n)^{-1}$ and $k$ is fixed,~\eqref{eq:varustat},~\eqref{eq:varcontrlongedge} and~\eqref{eq:varcontrshortedge} combined yield that 
\begin{equation}\label{eq:varbound}
    \Var{W_n^k(r_n,\varepsilon_n)}  = \bigO{n^dr_n^{d-(k-1)\alpha}\tilde{g}(\varepsilon_n^{-1})^{-t}}
\end{equation}
for some $t>0$. 

Now by~\eqref{eq:expwnk} and~\eqref{eq:barWnk} and the fact that $\varepsilon_n\to 0$ as $n\to\infty$, $\Exp{W_n^k(r_n,\varepsilon)}^2 = O(n^{2d}r_n^{2(d-(k-1)\alpha})$. Combining this with~\eqref{eq:varbound} yields that for fixed $\varepsilon$,
    \begin{equation}
        \frac{\Var{W_n^k(r_n,\varepsilon_n)}}{\Exp{W_n^k(r_n,\varepsilon_n)}^2} = O(n^{-d}r_n^{(k-1)\alpha-d}\varepsilon^{-\alpha t}).
    \end{equation}
    Now this expression tends to zero when $r_n\ll n^{d/((k-1)\alpha-d)}$ and $\varepsilon_n\geq \log(n)^{-1}$.

\end{proof}

We are now ready to prove Theorem~\ref{thm:localization_long_cliques}:

\begin{proof}[Proof of Theorem~\ref{thm:localization_long_cliques}]
    First of all,
    \begin{equation*}
        W_n^k(r_n) = W_n^k(r_n,\varepsilon_n) + \bar{W}_n^k(r_n,\varepsilon_n),
    \end{equation*}
    where $\bar{W}_n^k(r_n,\varepsilon_n)$ denotes the number of $k$-cliques with at least one edge of length at least $r_n$ that are not in $W_n^k(r_n,\varepsilon_n)$. Now by~\eqref{eq:barWnk} 
    \begin{equation*}
        \Exp{\bar{W}_n^k(r_n,\varepsilon_n)} = \bigO{n^dr_n^{d-(k-1)\alpha} \varepsilon_n^{d-\alpha}}.
    \end{equation*}
    Thus, by the Markov inequality,
    \begin{equation*}
        W_n^k(r_n) = W_n^k(r_n,\varepsilon_n) + \bigOp{n^dr_n^{d-(k-1)\alpha} \varepsilon_n^{d-\alpha}}.
    \end{equation*}
    Now by Lemma~\ref{lem:varcliques} and the Chebyshev inequality
    \begin{equation*}
        W_n^k(r_n) = \Exp{W_n^k(r_n,\varepsilon_n)}(1+\op(1)) + \bigOp{n^dr_n^{d-(k-1)\alpha}\varepsilon_n^{d-\alpha}}.
    \end{equation*}
    Now using that as in~\eqref{eq:expwnk}
    \begin{equation*}
        \lim_{n\to \infty}\frac{\Exp{W_n^k(r_n,\varepsilon_n)}}{n^dr_n^{d-(k-1)\alpha}} =  \frac{dC_d\pi^{d/2} M_k}{2\Gamma(1+\frac{d}{2})((k-1)\alpha-d)}
    \end{equation*}
    finishes the first part of the proof of the theorem. It remains to prove the convergence in distribution. By Lemma \ref{lem:cliqueexp} for sufficiently large $n \in \mathbb{N}$ we have
\begin{equation} \label{1rev2}
    \Exp{W_n^k(r_n)}  \geq cn^d r_n^{d-(k-1)\alpha}
\end{equation}    
    for some constant $c \in (0, \infty)$. Moreover, in particular we observe that
    \begin{equation}\label{eq:limexpinf}
        \lim_{n \to \infty} \Exp{W_n^k(r_n)} = \infty
    \end{equation} 
    by the assumption that $1\ll r_n\ll n^{d/((k-1)\alpha-d)}$. By Lemma \ref{TVP} below, for a random variable $P_n$ having a Poisson distribution with parameter $\Exp{W_n^k(r_n)}$:
    \begin{align*}
        d_{\text{TV}}\left(W^k_n(r_n),P_n(\beta)\right)&\leq c \Exp{W_n^k(r_n)}^{-1} n^d r_n^{2d-2(k-1)\alpha} \\
        & \leq c r_n^{d-(k-1)\alpha},
    \end{align*}
    which tends to zero as $n \to \infty$, where we have used \eqref{1rev2} in the last inequality. Moreover, since $P_n$ has a Poisson distribution with parameter $\Exp{W_n^k(r_n)}$ satisfying ~\eqref{eq:limexpinf}, we have that 
    \begin{equation*}
    \frac{P_n - \mathbb{E}[W^k_n(r_n)] }{\sqrt{\mathbb{E}[W^k_n(r_n)]}}  \xrightarrow[n\to \infty]{d} Z,
\end{equation*}
where $Z$ is a random variable with standard normal distribution. Thus, the same conclusion holds for $W^k_n(r_n)$, since $d_{\text{TV}}\left(W^k_n(r_n),P_n(\beta)\right)$ converges to 0, as $n\to \infty$. This finishes the proof.
\end{proof}

\section{Proof of Theorem~\ref{lem: wnc}}  \label{pr3}

By Lemma \ref{lem:cliqueexp}, when choosing $$r_n  = \Bigg( \frac{2\Gamma(1+\frac{d}{2})((k-1)\alpha-d)}{dC_d\pi^{d/2} M_k}\Bigg)^{\frac{1}{d-(k-1)\alpha}} r n^{\frac{d}{(k-1)\alpha-d}},$$ with $r \in (0,\infty)$, then
$$\lim_{n\to\infty}{\Exp{W^k_n(r_n)}} = r^{d-(k-1)\alpha}. $$
This will be used later to conclude the suitable Poisson limit, from which we will subsequently derive the extreme value behavior of the longest edge belonging to a $k$-clique. We first recall a formal way of constructing the underlying geometric graph by means of a marked Point process. This will later be useful to apply a Poisson approximation theorem.

\subsection{Formal construction of the random graph model} \label{formalc}

Let $( \mathbb{M}, \mathcal{M}, m)$ be a mark space, where $\mathbb{M}=[0,1)^{\mathbb{N}_0}$, $\mathcal{M}$ is a corresponding $\sigma$-algebra and $m$ is the distribution of an infinite sequence of independent $[0,1)$-uniformly distributed random variables. We let $\eta$ be a marked Poisson point process of unit intensity on $\mathcal{T}_n^d$ with marks in $\mathbb{M}$ distributed according to $m$. Thus, $\eta$ is a Poisson point process on $\mathcal{T}_n^d \times \mathbb{M}$. We can denote almost every realization of $\eta$ by
$$\eta = \Big\{ (x_i,(t_{i,0}, t_{i,1}, \dots)): {i=1,\dots, |\eta|} \Big\} ,$$
where $t_{1,0}<t_{2,0}\dots <t_{|\eta|,0}$, i.e. we use the first coordinates in order to determine the order in which the points in $\eta$ are enumerated. Then there is an edge $\{x_i,x_j\}$, $1\leq i <j\leq |\eta|$, if and only if $t_{i,j}\leq  g(x_i,x_j)$. For a set $\eta$ of marked points as above, we denote by $G(\eta)=(V(\eta),E(\eta))$ the random geometric graph constructed from $\eta$. 

\subsection{Poisson approximation}

We denote by $\mathbf{S}$ the set of all locally finite subsets of $\mathcal{T}_n^d\times\mathbb{M}$ and by $\mathbf{S}_l$ the set of subsets of $\mathcal{T}_n^d\times\mathbb{M}$ with cardinality $l$, $l\in\mathbb{N}$. We recall the following result.

\begin{lemma}[{\cite[Theorem 3.1.]{Penrose}}] \label{Th:Penrose18}

Let $l\in\mathbb{N}$, $f\colon \mathbf{S}_l\times \mathbf{S} \longrightarrow \{0,1\}$ a measurable function and for $\xi\in \mathbf{S}$ set
\begin{equation} \label{coupf}
	F(\xi):=\sum_{\psi\in\mathbf{S}_l:\psi\subset\xi}f(\psi,\xi\setminus \psi).
\end{equation}

Let $\eta$ be a marked Poisson point process, set $W:=F(\eta)$ and $\beta:=\mathbb{E}[W]$. For $x_1,\dots,x_l\in \mathcal{T}_n^d$ set 
$$p(x_1,\dots,x_l):=\mathbb{E}\left[f\left(\{(x_1,\tau_1),\dots,(x_l,\tau_l)\},\eta \right)\right],$$
where the $\tau_i$ are independent random variables in $\mathbb{M}$ with common distribution $\mathbf{m}$. 

Suppose that for almost every $\mathbf{x}=(x_1,\dots,x_l)$, $x_i\in \mathcal{T}_n^d$, with $p(x_1,\dots,x_l)>0$ we can find coupled random variables $U_\mathbf{x}$ and $V_\mathbf{x}$ such that the following holds:
\begin{enumerate}[label={\upshape(\roman*)}]
\item $W\overset{d}{=}U_\mathbf{x}$,
\item $F\left(\eta \cup\overset{l}{\underset{i=1}{\bigcup}}\{(x_i,\tau_i)\}\right)$ conditional on $f\left(\overset{l}{\underset{i=1}{\bigcup}}\{(x_i,\tau_i)\},\eta\right)=1$ has the same distribution as $1+V_\mathbf{x}$,
\item $\mathbb{E}\left[\vert U_\mathbf{x}-V_\mathbf{x}\vert\right]\leq w(\mathbf{x})$, for some measurable function $w$.
\end{enumerate}

Let $P(\beta)$ be a random variable with Poisson distribution and mean $\beta$. Then
\begin{equation}d_{\text{TV}}\left(W,P(\beta)\right)\leq\frac{\min(1,\beta^{-1})}{l!}\int_{\mathcal{T}_n^d \times \ldots \times \mathcal{T}_n^d}w(\mathbf{x})p(\mathbf{x})\operatorname{d} \mathbf{x},\label{th:Pdtv}\end{equation}
where $d_{TV}$ denotes the total variation distance between two discrete random variables. \newline
\end{lemma}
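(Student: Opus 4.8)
The statement is a Chen--Stein type Poisson approximation bound for a functional $W=F(\eta)$ of a marked Poisson process, and I would prove it by combining the Stein equation for the Poisson law with the multivariate Mecke equation. The plan is as follows. Fix a target set $A\subseteq\mathbb{Z}_{\geq 0}$ and let $h_A$ solve the Stein equation for $P(\beta)$, namely $\beta\,h_A(j+1)-j\,h_A(j)=\mathbf{1}_A(j)-\mathbb{P}(P(\beta)\in A)$ for all $j\geq 0$, with $h_A(0):=0$. Recall the classical Stein factor estimate $\|\Delta h_A\|_\infty:=\sup_{j\geq 0}|h_A(j+1)-h_A(j)|\leq\min(1,\beta^{-1})$ (Barbour--Eagleson), which I would simply cite. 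Plugging $W$ into the Stein equation and taking expectations gives $\mathbb{P}(W\in A)-\mathbb{P}(P(\beta)\in A)=\mathbb{E}\bigl[\beta\,h_A(W+1)-W\,h_A(W)\bigr]$, so
\begin{equation*}
d_{\mathrm{TV}}(W,P(\beta))=\sup_{A}\bigl|\mathbb{E}\bigl[\beta\,h_A(W+1)-W\,h_A(W)\bigr]\bigr|,
\end{equation*}
and it suffices to bound the right-hand side uniformly in $A$.

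Next I would rewrite both terms via the Mecke equation. Let $\lambda$ denote the unit-intensity (Lebesgue) measure on $\mathcal{T}_n^d$ and $m$ the mark law, and write $\eta$ for the marked Poisson process on $\mathcal{T}_n^d\times\mathbb{M}$. Since $F(\eta)=\sum_{\psi\subset\eta,\,|\psi|=l}f(\psi,\eta\setminus\psi)$, the multivariate Mecke formula applied with $z_i=(x_i,\tau_i)$ (the $\tau_i$ i.i.d.\ with law $m$, independent of $\eta$), noting that after the Mecke substitution the argument $\eta\setminus\psi$ inside $f$ reverts to the un-augmented $\eta$, yields $\beta=\mathbb{E}[W]=\frac{1}{l!}\int p(\mathbf{x})\,\mathrm{d}\mathbf{x}$, and more generally, for any bounded $h$,
\begin{equation*}
\mathbb{E}[W\,h(W)]=\frac{1}{l!}\int \mathbb{E}\Bigl[f\bigl(\{(x_i,\tau_i)\}_{i=1}^l,\eta\bigr)\;h\Bigl(F\bigl(\eta\cup\textstyle\bigcup_{i=1}^l\{(x_i,\tau_i)\}\bigr)\Bigr)\Bigr]\,\mathrm{d}\mathbf{x}.
\end{equation*}
Because $f\in\{0,1\}$, the inner expectation equals $p(\mathbf{x})\,\mathbb{E}\bigl[h\bigl(F(\eta\cup\bigcup\{(x_i,\tau_i)\})\bigr)\,\big|\,f(\cdot,\eta)=1\bigr]$ on $\{p(\mathbf{x})>0\}$ (and vanishes otherwise), and by hypothesis (ii) this conditional law is that of $h(1+V_\mathbf{x})$. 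On the other hand $\mathbb{E}[\beta\,h(W+1)]=\frac{1}{l!}\int p(\mathbf{x})\,\mathbb{E}[h(W+1)]\,\mathrm{d}\mathbf{x}$, and by hypothesis (i) $\mathbb{E}[h(W+1)]=\mathbb{E}[h(U_\mathbf{x}+1)]$.

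Subtracting, with $h=h_A$, the two displays give
\begin{equation*}
\mathbb{E}\bigl[\beta\,h_A(W+1)-W\,h_A(W)\bigr]=\frac{1}{l!}\int p(\mathbf{x})\;\mathbb{E}\bigl[h_A(U_\mathbf{x}+1)-h_A(V_\mathbf{x}+1)\bigr]\,\mathrm{d}\mathbf{x},
\end{equation*}
where the expectation on the right is over the coupled pair $(U_\mathbf{x},V_\mathbf{x})$. Since $h_A$ is defined on the integers, $|h_A(U_\mathbf{x}+1)-h_A(V_\mathbf{x}+1)|\leq\|\Delta h_A\|_\infty\,|U_\mathbf{x}-V_\mathbf{x}|\leq\min(1,\beta^{-1})\,|U_\mathbf{x}-V_\mathbf{x}|$, so taking expectations and invoking hypothesis (iii) bounds the integrand by $\min(1,\beta^{-1})\,p(\mathbf{x})\,w(\mathbf{x})$. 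Taking the supremum over $A$ then yields $d_{\mathrm{TV}}(W,P(\beta))\leq\frac{\min(1,\beta^{-1})}{l!}\int w(\mathbf{x})p(\mathbf{x})\,\mathrm{d}\mathbf{x}$, which is the claimed bound.

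The step I expect to require the most care is the bookkeeping in the multivariate Mecke equation for a \emph{marked} process: one must verify that inserting the $l$ deterministic locations $x_1,\dots,x_l$ together with fresh i.i.d.\ marks $\tau_1,\dots,\tau_l$ is exactly the operation dual to summing over $l$-subsets $\psi\subset\eta$, that $p(\mathbf{x})=\mathbb{E}[f(\{(x_i,\tau_i)\},\eta)]$ is precisely the normalising probability making the conditioning in (ii) legitimate, and that $\eta\setminus\psi$ inside $f$ becomes the original $\eta$ after the substitution. Once this is set up correctly, the remainder is a routine Stein-method calculation; the Poisson Stein factor bound $\|\Delta h_A\|_\infty\leq\min(1,\beta^{-1})$ is standard.
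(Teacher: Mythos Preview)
Your argument is correct and is essentially the standard Stein--Chen/Mecke proof of this inequality. Note, however, that the paper does not prove this lemma at all: it is quoted verbatim as \cite[Theorem~3.1]{Penrose} and used as a black box, so there is no ``paper's own proof'' to compare against. Your outline (Stein equation for the Poisson law, the Barbour--Eagleson bound $\|\Delta h_A\|_\infty\le\min(1,\beta^{-1})$, the multivariate Mecke formula to rewrite $\mathbb{E}[W h(W)]$, then the coupling hypotheses (i)--(iii)) is precisely the route taken in Penrose's original proof, and the care points you flag about the marked Mecke formula are the right ones.
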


Formally, write $\mathcal{C}_k$ for the set of all $k$-cliques, i.e. the set of all vertices $x_1, \ldots, x_k$ that form a clique. Making use of the formal construction described above, we will apply Lemma \ref{Th:Penrose18} with $l=k$ and $f \colon \mathbf{S}_k\times \mathbf{S}$ defined by
{\footnotesize
\begin{align*}
     & f\left( \left\{\left( x_1,(u^1_i)_{i\geq 0}\right) , \ldots \left( x_k,(u^k_i)_{i\geq 0}\right)\right\},\xi \right) \\
    & =\mathbf{1}\{ C_k:=\{x_1, \ldots, x_k\} \in \mathcal{C}_k, \max_{x,y \in C_k}|x-y|_T  \mathbf{1}_{\{\{x,y\} \in E(\xi\cup \{( x_1,(u^1_i)_{i\geq 0}) , \ldots ( x_k,(u^k_i)_{i\geq 0})\})\}}>r_n\} 
\end{align*}}
\noindent for $\left\{\left( x_1,(u^1_i)_{i\geq 0}\right) , \ldots \left( x_k,(u^k_i)_{i\geq 0}\right)\right\} \in \mathbf{S}_k$, so that $ W=F(\eta)=W^k_n(r_n)$. By definition, $W$ is the number of $k$ vertices that form a $k$-clique and contain an edge longer than $r_n$. One can formally check that $f$ is the indicator function of a measurable set.

Now we define the coupled random variables $U_\mathbf{x}$ and $V_\mathbf{x}$. To this end, we enlarge $\eta$ by adding $k$ marked points $x_1^{\text{m}}=(x_1, (u^1_i)_{i\geq 0}), \ldots, x_k^{\text{m}}=(x_k, (u^k_i)_{i\geq 0})$. Then we define $V_\mathbf{x}$ as the number of $k$ vertices other than $\{x_1, \ldots, x_k\}$ in the enlarged graph $G(\eta \cup \{x_1^{\text{m}}, \ldots, x_k^{\text{m}}\} )$ that both form a clique and contain an edge longer than $r_n$:
\begin{align*}
 & V_\mathbf{x}= \sum\nolimits_{1}
  \mathbf{1}\{\max|x-y|_T  \mathbf{1}_{\{\{x,y\} \in E(\xi\cup \{\left( x_1,(u^1_i)_{i\geq 0}\right) , \ldots \left( x_k,(u^k_i)_{i\geq 0}\right)\})\}}>r_n\} ,
\end{align*}
where the sum $\sum_1$ is taken over all $\{y_1, \ldots, y_k\} \neq \{x_1, \ldots, x_k\}$ with $y_1, \ldots, y_k \in V (G(\eta \cup \{x_1^{\text{m}}, \ldots, x_k^{\text{m}}\} ))$ that form a clique and the maximum in the indicator is with respect to all endpoints $x,y \in \{y_1, \ldots, y_k\}$.

Now consider the restriction of $G(\eta \cup \{x_1^{\text{m}}, \ldots, x_k^{\text{m}}\})$ to $\eta$, that is the subgraph of $G(\eta \cup \{x_1^{\text{m}}, \ldots, x_k^{\text{m}}\})$ by deleting the vertices $x_1, \ldots, x_k$ and all edges having one of them as an endpoint. We denote this graph by $\eta_R=G(\eta \cup \{x_1^{\text{m}}, \ldots, x_k^{\text{m}}\})_{|\eta} =(V(\eta_r),E(\eta_R))$ and we note that it has the same distribution as $G(\eta)$. Remark that these two graphs are not equal almost surely, because the presence of an edge not only depends on the marks of its endpoints, but also on all other marks, as they are used in order to determine the order according to which the points are enumerated. The random variable $U_\mathbf{x}$ is then defined as the number of $k$ vertices in the induced graph $\eta_R$ that both form to a $k$-clique and contain an edge longer than $r_n$:
$$ U_\mathbf{x}= \sum\nolimits_2 \mathbf{1}\{\max |z-y|_T \mathbf{1}_{\{ \{y, z\} \in E(\eta_R) \}} > r_n\},$$
where now the sum $\sum_2$ is taken over all cliques $\{y_1, \ldots, y_k\}$ with $y_1, \ldots, y_k \in V (\eta_r) $ and the maximum in the indicator is with respect to all endpoints $z,y \in \{y_1, \ldots, y_k\}$. Observe that the coupled random variables $U_\mathbf{x}$ and $V_\mathbf{x}$ defined above satisfy Assumptions (i) and (ii)  of Theorem \ref{Th:Penrose18} and  are such that $V_\mathbf{x}\geq U_\mathbf{x}$ by construction. It follows that 
\begin{align*}
 \mathbb{E}\left[\vert U_\mathbf{x}-V_\mathbf{x}\vert\right]&=\mathbb{E}\left[ V_\mathbf{x}-U_\mathbf{x}\right]\\
&=\mathbb{E}\Bigg[ \sum\nolimits_{1}
  \mathbf{1}\{\max|x-y|_T  \mathbf{1}_{\{\{x,y\} \in E(\xi\cup \{\left( x_1,(u^1_i)_{i\geq 0}\right) , \ldots \left( x_k,(u^k_i)_{i\geq 0}\right)\})\}}>r_n\} \Bigg]\\
& \quad  -\mathbb{E}\Bigg[\sum\nolimits_{2}  \mathbf{1}\{\max |z-y|_T \mathbf{1}_{\{ \{y, z\} \in E(\eta_R) \}} > r_n\}\Bigg]\\
&=:w(\mathbf{x}).
\end{align*}
Thus, Assumption (iii) of Theorem \ref{Th:Penrose18} is also satisfied. Moreover, from the calculations in the proof of Lemma \ref{lem:cliqueexp} we obtain for $x_1, \ldots, x_k \in \mathcal{T}_n^d$ that
\begin{align*}
w(\mathbf{x}) \leq c r_n^{d-(k-1)\alpha}.
\end{align*}
Similarly, we also have
\begin{align*}
p(\mathbf{x})&=\mathbb{E} \left[  f \left( \{\left( x_1,(u^1_i)_{i\geq 0}\right) , \ldots \left( x_k,(u^k_i)_{i\geq 0}\right)\},\xi \right) \right] \leq c r_n^{d-(k-1)\alpha}.
\end{align*}
Thus, we get the following upper bound for the integral in the r.h.s of \eqref{th:Pdtv}:
\begin{align*}
\int_{\mathcal{T}_n^d}w(\mathbf{x})&p(\mathbf{x})d\mathbf{x} \leq c n^d r_n^{2d-2(k-1)\alpha},
\end{align*}
which proves the following:

\begin{lemma} \label{TVP}
    Let $P_n(\beta)$ be a random variable with Poisson distribution and mean $\beta = \mathbb{E}[W^k_n(r_n)]$, where $1\ll r_n \ll n$. Then
\begin{equation*}
d_{\text{TV}}\left(W^k_n(r_n),P_n(\beta)\right)\leq c \frac{\min(1,\beta^{-1})}{k!} n^d r_n^{2d-2(k-1)\alpha}
\end{equation*}
where $c \in (0, \infty)$ is a constant independent of $n$. \newline
\end{lemma}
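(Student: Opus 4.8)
The plan is to apply the Poisson approximation estimate of Lemma~\ref{Th:Penrose18} with $l=k$ to the functional $W=F(\eta)=W_n^k(r_n)$, using the marked-process formalism of Section~\ref{formalc}. Here one takes $f(\{(x_1,(u^1_i)_{i\ge 0}),\dots,(x_k,(u^k_i)_{i\ge 0})\},\xi)$ to be the indicator that $\{x_1,\dots,x_k\}$ is a $k$-clique of $G(\xi\cup\{(x_i,(u^i_j)_{j\ge 0})\}_{i=1}^{k})$ carrying an edge of length exceeding $r_n$, which is the indicator of a measurable set, so that $W=F(\eta)$ counts exactly the $k$-cliques with a long edge. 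The coupling $(U_\mathbf{x},V_\mathbf{x})$ is the one just described: enlarging $\eta$ by the $k$ deterministic marked points $x_i^{\mathrm m}$, one lets $V_\mathbf{x}$ count the long-edge $k$-cliques of $G(\eta\cup\{x_1^{\mathrm m},\dots,x_k^{\mathrm m}\})$ other than $\{x_1,\dots,x_k\}$, and $U_\mathbf{x}$ count those of the induced subgraph $\eta_R$ on $V(\eta)$. Since $\eta_R\overset{d}{=}G(\eta)$ one has $U_\mathbf{x}\overset{d}{=}W$ (Assumption (i)); conditionally on $f(\{x_i^{\mathrm m}\}_i,\eta)=1$ the set $\{x_1,\dots,x_k\}$ is itself a long-edge clique, so $F(\eta\cup\{x_i^{\mathrm m}\}_i)=1+V_\mathbf{x}$ (Assumption (ii)); and adding vertices and edges can only create cliques, so $V_\mathbf{x}\ge U_\mathbf{x}$ and $\mathbb{E}\lvert U_\mathbf{x}-V_\mathbf{x}\rvert=\mathbb{E}[V_\mathbf{x}-U_\mathbf{x}]=:w(\mathbf{x})$ works in Assumption (iii).

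The next step is to bound $w(\mathbf{x})$ and $p(\mathbf{x})$ pointwise. Because the edges of $\eta_R$ among points of $\eta$ are literally those of $G(\eta\cup\{x_i^{\mathrm m}\}_i)$, any long-edge $k$-clique counted by $V_\mathbf{x}$ but not by $U_\mathbf{x}$ must use at least one of the inserted points $x_1,\dots,x_k$; hence $w(\mathbf{x})$ is at most $k$ times the expected number of long-edge $k$-cliques through a fixed vertex. Re-running the estimate behind Lemma~\ref{lem:cliqueexp} with one vertex pinned—localizing the $(k-1)$-fold integral near that vertex and using $\alpha>d$—gives $w(\mathbf{x})\le c\,r_n^{d-(k-1)\alpha}$ uniformly in $\mathbf{x}$ (this is where $1\ll r_n\ll n$ enters). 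The same localized computation, together with the fact that any long-edge $k$-clique forces at least $k-1$ edges of length of order $r_n$ or larger, gives $p(\mathbf{x})\le c\,r_n^{d-(k-1)\alpha}$; and from the definition of $p$ in Lemma~\ref{Th:Penrose18} together with the Mecke formula, $\int_{(\mathcal{T}_n^d)^k}p(\mathbf{x})\,d\mathbf{x}=k!\,\mathbb{E}[W_n^k(r_n)]$, which by Lemma~\ref{lem:cliqueexp} is of order $n^d r_n^{d-(k-1)\alpha}$.

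Combining the two bounds gives
\[
\int_{(\mathcal{T}_n^d)^k} w(\mathbf{x})\,p(\mathbf{x})\,d\mathbf{x}\ \le\ c\,r_n^{d-(k-1)\alpha}\int_{(\mathcal{T}_n^d)^k}p(\mathbf{x})\,d\mathbf{x}\ \le\ c\,n^{d}r_n^{2d-2(k-1)\alpha},
\]
and plugging this into the bound~\eqref{th:Pdtv} with $\beta=\mathbb{E}[W_n^k(r_n)]$ produces exactly the claimed estimate.

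The main obstacle I expect is the uniform pointwise control $w(\mathbf{x})\le c\,r_n^{d-(k-1)\alpha}$. This requires a careful reduction of $V_\mathbf{x}-U_\mathbf{x}$ to long-edge $k$-cliques incident to one of the inserted vertices—being precise that $\eta_R$ is the \emph{restriction} of the enlarged graph, so that no discrepancy arises from the re-enumeration of the Poisson points among themselves—and then redoing the localization of Lemma~\ref{lem:cliqueexp} with a pinned vertex while checking that the bound does not deteriorate as that vertex moves around the torus. Verifying the measurability of $f$ and that the coupling genuinely realizes Assumptions (i)--(ii) in the marked-point formalism is routine but, as emphasized in Section~\ref{formalc}, must be done with care because the presence of an edge depends on all the marks through the enumeration order.
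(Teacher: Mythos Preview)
Your proposal is correct and follows essentially the same route as the paper: apply Penrose's Theorem~\ref{Th:Penrose18} with the indicator $f$ and the coupling $(U_\mathbf{x},V_\mathbf{x})$ described in Section~\ref{pr3}, then invoke the localization computations behind Lemma~\ref{lem:cliqueexp} to obtain the uniform bound $w(\mathbf{x})\le c\,r_n^{d-(k-1)\alpha}$ and control the integral of $p$. Your use of the Mecke identity $\int_{(\mathcal{T}_n^d)^k} p(\mathbf{x})\,d\mathbf{x}=k!\,\mathbb{E}[W_n^k(r_n)]$ in place of a pointwise bound on $p$ is in fact slightly cleaner than the paper's presentation, and your flagged obstacle---checking that the uniform estimate on $w(\mathbf{x})$ survives when several of the inserted points participate in the extra clique---is exactly the point that both arguments leave to the calculations of Lemma~\ref{lem:cliqueexp}.
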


Now, we are in position to close the proof of Theorem  \ref{lem: wnc}. Recall again that
$$\lim_{n\to\infty}{\Exp{W^k_n(r_n)}} = r^{d-(k-1)\alpha} $$
in the present case. Moreover, by definition of $r_n$ the expression $n^d r_n^{2d-2(k-1)\alpha}$ tends to 0, as $n \to \infty$. Thus, since the total variation distance between two Poisson random variables is bounded by the absolute
value of the difference of their parameters, Theorem ~\ref{lem: wnc} follows from Lemma \ref{TVP}. Then Corollary \ref{frechet} is an easy consequence.

\begin{remark} \label{ustat2}
    As mentioned in Remark \ref{ustat} one can also use for example \cite[Theorem 7.1]{decreusefond2016} regarding Poisson approximation of U-statistics. However, our bound in Lemma \ref{TVP} involves the additional factor $\min \{1, \beta^{-1}\}$, which is missing in \cite[Theorem 7.1]{decreusefond2016}. This factor thus improves the upper bound in case the expectation is large. Indeed, this is the case in the setting of Theorem \ref{thm:localization_long_cliques}, as the expectation diverges, thus making the factor crucial to prove the central limit theorem.
\end{remark}

\section{Proof of Theorem~\ref{thm:cliques_total}} \label{pr2}

By Lemma~\ref{lem:cliqueexp}, the expected number of cliques with one vertex that is at least $1/\varepsilon_n$ away from the other vertices is upper bounded by $Cn^d\varepsilon_n^{\alpha(k-1)-d}$ for large enough $n$.
Thus, by the Markov inequality,
    \begin{equation}
        K_k = K_k(\varepsilon_n) + \bigOp{n^d\varepsilon_n^{\alpha(k-1)-d}}.
    \end{equation}
    Furthermore, as all edges in $K_k(\varepsilon_n)$ are short edges, by the same arguments as in the proof of Lemma \ref{lem:varcliques}, we obtain
         \begin{equation*}
        \lim_{n\to\infty}\frac{\Var{K_k(\varepsilon_n)}}{\Exp{K_k(\varepsilon_n)}^2} = 0.
    \end{equation*}
    Then, the Chebyshev inequality yields that 
    \begin{equation}
        K_k(\varepsilon_n) = \Exp{K_k(\varepsilon_n)}(1+\op(1)) .
    \end{equation}
        Now, 
    \begin{align*}
     &\Exp{K_k(\varepsilon_n)}  \nonumber\\
     & \geq C_dn^d\int_{\mathcal{B}_{0,1/\varepsilon_n}}\dots \int_{\mathcal{B}_{0,1/\varepsilon_n}}\prod_{i\leq k-1}g(\mathbf{x_i},0)\prod_{1\leq u<v\leq k}g(\mathbf{x_u},\mathbf{x_v})d\mathbf{x_1}\dots d\mathbf{x_{k-1}},
\end{align*}
resulting in
\begin{equation}
        \frac{K_k}{K_k(\varepsilon_n)} = 1 + \frac{\bigOp{n^d\varepsilon_n^{\alpha(k-1)-d}}}{n^d(1+\op(1))},
    \end{equation}
    so that
\begin{equation}
    \frac{K_k}{K_k(\varepsilon_n)}\plim 1.
\end{equation}
\qed 

\section{Conclusion and discussion}\label{sec:conclusion}
In this paper, we have investigated the distances within cliques of arbitrary size $k$, with a focus on large-deviations behavior of the biggest distance in a $k$-clique for the soft random geometric graph on a $d$-dimensional torus of radius $n$. While all typical cliques contain only low-weighted edges, the largest distance in the $k$-clique scales as $n^{d/((k-1)\alpha-d)}$, indicating that the longest edge in a $k$-clique decreases in $k$, in contrast to the length of a typical edge in any $k$-clique. Furthermore, with high probability, such a clique contains $k-1$ edges of this length, and all other edges are short. We further showed that the properly re-scaled length of the longest edge in a clique converges in distribution to a random variable with Fr\'echet distribution. 

We believe that this leads to several interesting questions for further research. First, it would be interesting to investigate whether this convergence holds for all other possible subgraph counts. We believe that for other subgraph counts, the scaling of the longest edge will only depend on the minimal degree of the subgraph, and not on the subgraph size, and it would be interesting to show this in further research. Another option would be to study asymptotic properties of clique counts and distributional limits of the maximal clique (size) in the soft random geometric graph with a similar approach and methodology as presented in this paper. 

Secondly, it would be interesting to extend these results to different types of geometric graphs, particularly for the discrete version of the model in this paper, see \cite{coppersmith2002diameter}. Here, it has been shown in \cite{rs1} that there are subtle differences for the longest global edge, compared to the continuous version. Other options include investigating other models with different underlying spaces such as hyperbolic random graphs~\cite{krioukov2010}. In these models, the clique-behavior would also be interesting to investigate. 

Moreover, this work shows the likelihood of unlikely cliques with long edges, therefore focusing on the structures of cliques rather than their total counts. It would be interesting to see whether these insights on the behavior of the largest edge-lengths can also lead to hypothesis tests to distinguish different random graph models or to detect anomalies in graphs with statistics such as in~\cite{litvak2022}.

Furthermore, in this paper we work with the particular connection probability~\eqref{cf}. Still, our results can also be transferred to the setting in which this connection probability has exponential decay, where $g(x,y)=\exp(-\lambda|x-y|_T^{-\alpha})$, $\lambda, \alpha>0$. In such a case, we believe that one would obtain a convergence of Gumbel type for the maximal clique distance, similar as for the maximal edge-length in~\cite{rs1}.

Finally, often one is interested in the sum of (powers of) the edge-lengths in geometric graphs (see for example \cite{rst}). As an additional insight into the structure of cliques, it would be interesting to study the total sum of edge-lengths in $k$-cliques, and other edge and/or edge-length functionals.

\paragraph{Index of notation.} 
\begin{align*}
    & \mathcal{T}_n^d: && \textnormal{ }d\textnormal{-dimensional torus of radius } n  \\
    & C_d: && \textnormal{ constant such that the volume of the }d\textnormal{-dimensional torus  }  \\
    & && \textnormal{ has }  \operatorname{vol} (\mathcal{T}_n^d )= C_dn^d\\
    & W_n^k(r_n): && \textnormal{ number of } k\textnormal{-cliques with at least one edge of length at least } r_n \\
    & W_n^k(r_n,\varepsilon): &&\textnormal{ number of } k\textnormal{-cliques with exactly }k-1 \textnormal{ edges of length at least } r_n \\
    & &&\textnormal{ and all other edges of length at most } 1/\varepsilon \\
    & K_k(\varepsilon_n): && \textnormal{ number of } k\textnormal{-cliques with all interdistances of length at most } \varepsilon_n  \\
   & K_k: && \textnormal{ number of } k\textnormal{-cliques}   \\
    & T_n(r_n): && \textnormal{ number of triangles that a given edge of length } r_n\textnormal{ is part of}   \\
    & \mathcal{B}_{y,1/\varepsilon_n}: && \textnormal{ ball of radius }  1/\varepsilon_n \textnormal{ around }  y  \\
    & A_k(r_n): && \textnormal{ number of } k\textnormal{-cliques that a given edge of length }  r_n \textnormal{ is part of}
\end{align*}

\paragraph{Acknowledgements.} C.S. was funded by NWO VENI grant 202.001 and NWO M2 grant 0.397.

\bibliography{lit}

\end{document}